\definecolor{halfgray}{gray}{0.55} 
\definecolor{webgreen}{rgb}{0,0.5,0}
\definecolor{webbrown}{rgb}{.6,0,0} \hypersetup{%
\theoremstyle{plain}
\newtheorem{theorem}{Theorem}[section]
\newtheorem{corollary}[theorem]{Corollary}
\theoremstyle{definition}
\newtheorem{remark}[theorem]{Remark}
\newtheorem{example}[theorem]{Example}
\def\Z{\mathbb{Z}}
\def\R{\mathbb{R}}
\begin{document}

\title{Linearization and H\" older Continuity for Nonautonomous Systems}

\begin{abstract} We consider a nonautonomous system 
	\[ \dot x=A(t)x+f(t,x,y),\quad \dot y = g(t,y)\]
	and give conditions under which there is a transformation of the form
	$H(t,x,y)$
	$=(x+h(t,x,y),y)$ taking its solutions onto the solutions of
	the partially linearized system
	\[ \dot x=A(t)x,\quad \dot y = g(t,y).\]
	 Shi and Xiong \cite{SX} proved a special case where $g(t,y)$ was a linear function of 
	 $y$ and $\dot x=A(t)x$ had an exponential dichotomy. Our assumptions on $A$ and $f$ are of the general form considered by
	 Reinfelds and Steinberga \cite{RS}, which include many of the generalizations
	 of Palmer's theorem proved by other authors. Inspired by the work of Shi and Xiong,
	 we also prove H\" older continuity of $H$ and its inverse in $x$ and $y$.
	 Again the proofs are given in the context of Reinfelds and Steinberga but we 
	 show what the results reduce to when $\dot x=A(t)x$ is assumed to have an exponential
	 dichotomy. The paper is concluded with the discrete version of the results. 

\end{abstract}

\author{Lucas Backes}
\address{\noindent Departamento de Matem\'atica, Universidade Federal do Rio Grande do Sul, Av. Bento Gon\c{c}alves 9500, CEP 91509-900, Porto Alegre, RS, Brazil.}
\email{lucas.backes@ufrgs.br} 

\author{Davor Dragi\v cevi\'c}
\address{Department of Mathematics, University of Rijeka, Croatia}
\email{ddragicevic@math.uniri.hr}

\author{Kenneth J. Palmer}
\address{Department of Mathematics, National Taiwan University, Taipei, Taiwan}
\email{palmer@math.ntu.edu.tw}

\date{\today}

\maketitle

\section{Introduction}
One of the basic questions in the qualitative theory of dynamical systems is whether a nonlinear system (in  a neighborhood of its equilibrium) is  equivalent to its linear part. The celebrated Grobman-Hartman theorem (see~\cite{G1,G2,H1,H2}) asserts 
that if $x_0$ is a hyperbolic fixed point of a $C^1$-diffeomorphism $F\colon \mathbb R^d \to \mathbb R^d$ (i.e. the spectrum of $DF(x_0)$ does not intersect the unit circle in $\mathbb C$), then there exists a neighborhood $U$ of $x_0$ such that $F$ on $U$ is topologically conjugated to
$DF(x_0)$. In addition, this result also has  its global version. Namely, if $A\colon \mathbb R^d \to \mathbb R^d$ is a hyperbolic automorphism and $f\colon \R^d \to \R^d$ is a bounded Lipschitz map whose Lipschitz constant is sufficiently small, then $A$ and $A+f$ are topologically conjugated on $\mathbb R^d$.
Furthermore, there are analogous results for the case of flows. These results were extended to the case of  Banach spaces  independently by Palis~\cite{Palis} and Pugh~\cite{Pugh}, who also greatly simplified the original arguments of Grobman and Hartman.

It is well-known that even if $F$ is $C^\infty$, the conjugacy can fail to be locally Lipschitz. In fact, the conjugacy is in general only locally  H\" older continuous. Although this fact was apparently known to experts for some time, to the best of our knowledge the first written proof appeared in~\cite{SX} (in a more general setting that we  describe below). More recently, the same results was essentially reproved in~\cite{BV}. We stress that many works were devoted to the problem of formulating sufficient conditions which would ensure that the conjugacy  exhibits higher regularity properties. In this direction, we refer to the seminal works of Sternberg~\cite{Stern} and Belitskii~\cite{Bel73,Bel78},
as well as to some more recent contributions~\cite{E1,E2, R-S-JDDE04, R-S-JDDE06, ZhangZhang14JDE,ZZJ} and references therein.

The first version of the Grobman-Hartman theorem for nonautonomous dynamics is due to Palmer~\cite{Palmer}. In order to describe this result, let us consider a nonlinear and nonautonomous differential equation
\begin{equation}\label{j1}
x'=A(t)x+f(t,x),
\end{equation}
where $A\colon \mathbb R \to M_d$ and $f\colon \mathbb R\times \mathbb R^d\to \mathbb R^d$ are continuous maps. Here, $M_d$ denotes the space of all real matrices of order $d$. Furthermore, let 
\begin{equation}\label{j2}
x'=A(t)x,
\end{equation}
be the corresponding linear part. Assume  that~\eqref{j2} admits an exponential dichotomy (see Subsection~\ref{EX}) and that $f$ is bounded and Lipschitz in $x$ with a sufficiently small Lipschitz constant. Under these assumptions, Palmer proved that~\eqref{j1} and~\eqref{j2} are 
topologically conjugated. We note that an analogous result for the case of discrete time was obtained by Aulbach and Wanner~\cite{AW}. In a similar manner to that in the case of autonomous dynamics, the conjugacies in Palmer's theorem are in  general only locally H\" older continuous~\cite{SX,BV}.
Recently, several authors formulated sufficient conditions under which the conjugacies exhibit higher regularity. We refer to~\cite{CR, CDS, DZZ, DZZ2} and references therein.

In addition, several authors obtained important extensions of the Palmer's theorem by relaxing some of its assumptions.  In particular, Lin~\cite{Lin} discussed the case when $f$ can be unbounded and~\eqref{j2} is asymptotically stable. Moreover, Jiang considered the case when~\eqref{j2} admits either ordinary or a certain
general type of dichotomy~\cite{Jiang, Jiang2}. Finally, Reinfelds and Steinberga~\cite{RS} presented a rather general linearization result than can be applied to situation when~\eqref{j2} does not possess any type of dichotomy and that includes the main results from~\cite{Jiang, Jiang2, Palmer} as a particular case.
We also note that the approach developed in~\cite{RS} was used in~\cite{BD} to establish sufficient conditions under which~\eqref{j1} and~\eqref{j2} are topologically equivalent,  when~\eqref{j2} admits  the so-called generalized exponential dichotomy. However, in~\cite{BD} the authors in addition show that the conjugacies are unique in a 
suitable class.

An interesting extension of Palmer's theorem was proposed by Shi and Xiong~\cite{SX}. Let us consider the following coupled system
\begin{equation}\label{j3}
x'=A(t)x+f(t,x, y), \quad y'=B(t)y,
\end{equation}
where $A\colon \mathbb R\to M_{d_1}$, $B\colon \mathbb R\to M_{d_2}$ and $f\colon \mathbb R\times \mathbb R^{d_1}\times \mathbb R^{d_2}\to \mathbb R^{d_1}$ are continuous maps. Furthermore, we consider the associated linear system
\begin{equation}\label{j4}
x'=A(t)x, \quad y'=B(t)y.
\end{equation}
Assume that $x'=A(t)x$ admits an exponential dichotomy and that $f$ is bounded and Lipschitz in $(x, y)$ with a sufficiently small Lipschitz constant. Under these assumptions, it is proved in~\cite{SX} that~\eqref{j3} and~\eqref{j4} are topologically conjugated and that the corresponding conjugacies are locally  H\" older continuous.

The goal of the present paper is to essentially combine ideas from~\cite{RS} and~\cite{SX}. More precisely, following~\cite{RS} we formulate new  sufficient conditions under which~\eqref{j3} and~\eqref{j4} are topologically conjugated (by also allowing the second equation in~\eqref{j3} and~\eqref{j4} to be nonlinear).
Our basic linearization result (see Theorem~\ref{t1}) does not require that the first component in~\eqref{j4} admit an exponential dichotomy (or even ordinary dichotomy). We also formulate sufficient conditions for the  H\" older continuity of the conjugacies in both variables $x$ and $y$. We stress that 
 H\" older linearization was not  previously discussed in the setting of~\cite{RS}. Finally, we note that in the particular case when the first equation in~\eqref{j4} admits an exponential dichotomy, our  H\" older linearization result improves that from~\cite{SX}  (see Remark~\ref{Obs}).

The paper is organized as follows. In Section~\ref{CT} we consider the case of continuous time
and first formulate our general linearization result whose proof follows closely the ideas from~\cite{RS}. Afterwards, we discuss the H\" older continuity of the conjugacies in both variables separately. We conclude by comparing 
our H\" older linearization results with that from~\cite{SX}. Finally, in Section~\ref{DT} we obtain the corresponding results for the case of discrete time.

\section{The case of continuous time}\label{CT}
\subsection{Preliminaries}\label{Pre}
Let $(X, |\cdot |_X)$ and $(Y, |\cdot |_Y)$ be two arbitrary Banach spaces. For the sake of simplicity both norms $|\cdot |_X$ and $|\cdot |_Y$ will be denoted simply by $|\cdot |$. By $\mathcal B(X)$ we denote the space of all bounded operators on $X$ equipped with the operator norm (which we will also denote by 
$|\cdot |$).

Let $A\colon \R \to \mathcal B(X)$ be such that $t\mapsto A(t)$ is continuous. Furthermore, let $f\colon \R \times X\times Y\to X$ be a continuous map with the property that there exist continuous functions $\mu, \gamma \colon \R \to [0,\infty)$ such that 
\begin{equation}\label{feq}
|f(t, x, y)|\le \mu(t) \quad \text{and} \quad |f(t,x_1, y)-f(t, x_2, y)|\le \gamma(t) |x_1-x_2|, 
\end{equation}
for $t\in \R$, $x, x_1, x_2\in X$ and $y\in Y$. Finally, let $g\colon \R \times Y \to Y$ be a continuous map and suppose  that solutions of 
$\dot y=g(t,y)$ are defined for all time and that there exists a unique such solution
$t\mapsto y(t)$ such that $y(\tau)=\eta$ for any given pair $(\tau,\eta)\in \R \times Y$. We consider a coupled system
\begin{equation}\label{Heq2} \dot x=A(t)x+f(t,x,y),\quad \dot y=g(t,y).\end{equation}
In Appendix~\ref{A1} we show that our conditions imply
that the solutions of~\eqref{Heq2} are defined for all time and that there exists a unique such solution
$t \mapsto (x(t),y(t))$ such that $(x(\tau),y(\tau))=(\xi,\eta)$ for any given triple $(\tau,\xi,\eta)\in \R \times X \times Y$.

By $T(t,s)$ we will denote the evolution family associated to the equation $\dot x=A(t)x$. Furthermore, let $P\colon \R \to \mathcal B(X)$ be an arbitrary Bochner measurable map. We define
\begin{equation}\label{G}
\mathcal G(t,s)=\begin{cases}
T(t,s)P(s) & \text{for $t\ge s$,}\\
-T(t,s)(I-P(s)) & \text{for $t<s$,}
\end{cases}
\end{equation}
where $I$ denotes the identity operator on $X$.
\subsection{A linearization result}
Besides~\eqref{Heq2}, we also consider the uncoupled system
\begin{equation}\label{Heq1} \dot x=A(t)x,\quad \dot y=g(t,y).\end{equation}
We denote by $t\mapsto (x_1(t,\tau,\xi), y(t,\tau,\eta))$ the solution of (\ref{Heq1})
such that $x(\tau)=\xi$, $y(\tau)=\eta$ and by $t\mapsto (x_2(t,\tau,\xi,\eta), y(t,\tau,\eta))$ the solution of (\ref{Heq2}) such that $x(\tau)=\xi$, $y(\tau)=\eta$.

We are now in a position to formulate our first result which gives conditions under which~\eqref{Heq2} and~\eqref{Heq1} are topologically equivalent.
\begin{theorem}\label{t1}
Suppose that  
\begin{equation}\label{bound}
N:=\sup_{t\in \R} \int^{\infty}_{-\infty}|\mathcal G(t,s)|\mu(s)\, ds<\infty  \quad \text{and} \quad q:= \sup_{t\in \R}\int^{\infty}_{-\infty}|\mathcal G(t,s)|\gamma(s)\, ds <1.
\end{equation}
Then, there exists a continuous function $H\colon \R \times X\times Y\to X\times Y$ of the form $H(t, x, y)=(x+h(t,x, y), y)$, where  $\sup_{t,x,y}|h(t,x, y)|<\infty$, such that if $t\mapsto (x(t),y(t))$ is a solution of (\ref{Heq1}),
then $t\mapsto H(t,x(t),y(t))$ is a solution of (\ref{Heq2}). In addition, there exists a continuous function $\bar H \colon \R \times X\times Y\to X\times Y$ of the form $\bar H(t, x, y)=(x+\bar h(t,x, y), y)$, where $\sup_{t,x, y}|\bar h(t,x, y)|<\infty$,  such that if $t\mapsto (x(t),y(t))$ is a solution of (\ref{Heq2}),
then $t\mapsto \bar H(t,x(t),y(t))$ is a solution of (\ref{Heq1}). Moreover, $H$ and $\bar H$ are inverses of
each other, that is,
\[ H(t,\bar H(t,x,y))=(x,y)=\bar H(t,H(t,x,y)),\]
for $t\in \R$ and $(x, y)\in X\times Y$.
\end{theorem}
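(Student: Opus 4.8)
The plan is to construct $h$ as the fixed point of a contraction, obtain $\bar h$ from an explicit integral formula, and then check directly that $H$ and $\bar H$ are mutually inverse. Let $\mathcal{C}$ denote the Banach space of bounded continuous maps $\R\times X\times Y\to X$ with the supremum norm, and for $h\in\mathcal{C}$ put
\begin{equation*}
(\mathcal{T}h)(\tau,\xi,\eta)=\int_{-\infty}^{\infty}\mathcal{G}(\tau,s)\,f\bigl(s,\,x_1(s,\tau,\xi)+h(s,x_1(s,\tau,\xi),y(s,\tau,\eta)),\,y(s,\tau,\eta)\bigr)\,ds.
\end{equation*}
By the first bound in~\eqref{feq}, $|(\mathcal{T}h)(\tau,\xi,\eta)|\le\int_{-\infty}^{\infty}|\mathcal{G}(\tau,s)|\mu(s)\,ds\le N$, while continuity of $\mathcal{T}h$ follows from continuous dependence of $x_1,y$ on initial data, continuity of $f$, and dominated convergence with the majorant $|\mathcal{G}(t,s)|\mu(s)$, which is integrable locally uniformly in $t$ because $\mathcal{G}(t,s)=T(t,t_0)\mathcal{G}(t_0,s)$ whenever $s$ lies outside the interval with endpoints $t,t_0$ and $T$ is bounded on compact intervals. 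Thus $\mathcal{T}\colon\mathcal{C}\to\mathcal{C}$, and by the second bound in~\eqref{feq} it is a contraction with constant $q<1$, so it has a unique fixed point $h\in\mathcal{C}$ with $\sup|h|\le N$. The cocycle identities for $x_1$ and $y$ turn the fixed-point equation into the invariance property: if $t\mapsto(x(t),y(t))$ solves~\eqref{Heq1} and $z(t):=h(t,x(t),y(t))$, then $z(t)=\int_{-\infty}^{\infty}\mathcal{G}(t,s)f(s,x(s)+z(s),y(s))\,ds$. Differentiating, and using that $\mathcal{G}$ jumps by the identity across the diagonal (so that $\frac{d}{dt}\int_{-\infty}^{\infty}\mathcal{G}(t,s)\Phi(s)\,ds=A(t)\int_{-\infty}^{\infty}\mathcal{G}(t,s)\Phi(s)\,ds+\Phi(t)$ for bounded continuous $\Phi$), one gets $\dot z=A(t)z+f(t,x(t)+z(t),y(t))$; combined with $\dot x=A(t)x$ this shows $(x(t)+z(t),y(t))$ solves~\eqref{Heq2}, which is the first assertion with $H(t,x,y)=(x+h(t,x,y),y)$.

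For the reverse map no fixed point is needed: define
\begin{equation*}
\bar h(\tau,\xi,\eta)=-\int_{-\infty}^{\infty}\mathcal{G}(\tau,s)\,f\bigl(s,x_2(s,\tau,\xi,\eta),y(s,\tau,\eta)\bigr)\,ds,
\end{equation*}
which is bounded by $N$ and continuous by the same reasoning. If $t\mapsto(x(t),y(t))$ solves~\eqref{Heq2}, then uniqueness gives $x_2(s,t,x(t),y(t))=x(s)$ and $y(s,t,y(t))=y(s)$, hence $w(t):=\bar h(t,x(t),y(t))=-\int_{-\infty}^{\infty}\mathcal{G}(t,s)f(s,x(s),y(s))\,ds$; differentiating as before yields $\dot w=A(t)w-f(t,x(t),y(t))$, so $(x(t)+w(t),y(t))$ solves~\eqref{Heq1}. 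This is the second assertion with $\bar H(t,x,y)=(x+\bar h(t,x,y),y)$.

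Finally one checks that $H$ and $\bar H$ are mutually inverse. For $\bar H\circ H=\mathrm{Id}$, fix $(\tau,\xi,\eta)$, set $(\phi(t),\psi(t))=(x_1(t,\tau,\xi),y(t,\tau,\eta))$ and $z(t)=h(t,\phi(t),\psi(t))$; then $(\phi(t)+z(t),\psi(t))$ solves~\eqref{Heq2}, so by uniqueness $x_2(s,t,\phi(t)+z(t),\psi(t))=\phi(s)+z(s)$, and inserting this into the formula for $\bar h$ and comparing with the invariance formula for $z$ gives $\bar h(t,\phi(t)+z(t),\psi(t))=-z(t)$; at $t=\tau$ this reads $\bar h(\tau,\xi+h(\tau,\xi,\eta),\eta)=-h(\tau,\xi,\eta)$, i.e.\ $\bar H(\tau,H(\tau,\xi,\eta))=(\xi,\eta)$. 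For $H\circ\bar H=\mathrm{Id}$, fix $(\tau,\xi,\eta)$, set $(\chi(t),\psi(t))=(x_2(t,\tau,\xi,\eta),y(t,\tau,\eta))$ and $w(t)=\bar h(t,\chi(t),\psi(t))$; then $(\chi(t)+w(t),\psi(t))$ solves~\eqref{Heq1}, so by the invariance property $v(t):=h(t,\chi(t)+w(t),\psi(t))$ satisfies $v(t)=\int_{-\infty}^{\infty}\mathcal{G}(t,s)f(s,\chi(s)+w(s)+v(s),\psi(s))\,ds$; since also $-w(t)=\int_{-\infty}^{\infty}\mathcal{G}(t,s)f(s,\chi(s)+w(s)+(-w(s)),\psi(s))\,ds$, the contraction estimate (constant $q<1$) forces $v=-w$, and taking $t=\tau$ gives $h(\tau,\xi+\bar h(\tau,\xi,\eta),\eta)=-\bar h(\tau,\xi,\eta)$, i.e.\ $H(\tau,\bar H(\tau,\xi,\eta))=(\xi,\eta)$.

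The routine parts of this plan are the contraction estimate and the differentiation of the Green's-function integrals, which depends on the jump relation $\lim_{t\downarrow s}\mathcal{G}(t,s)-\lim_{t\uparrow s}\mathcal{G}(t,s)=I$ and a Leibniz-rule computation. I expect the main obstacle to be the inversion step: because $\dot x=A(t)x$ is not assumed to possess any dichotomy, one cannot argue that a bounded solution of the homogeneous equation must vanish, and must instead keep track of the precise integral formulas for $h$ and $\bar h$ and invoke uniqueness of bounded solutions of the relevant non-homogeneous equation. A lesser technical point, needed only for the continuity assertions, is the local-uniform-in-$t$ integrable domination of $|\mathcal{G}(t,s)|\mu(s)$, supplied by the identity $\mathcal{G}(t,s)=T(t,t_0)\mathcal{G}(t_0,s)$ for $s$ outside the interval joining $t$ and $t_0$.
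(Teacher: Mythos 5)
Your proposal follows essentially the same route as the paper: construct $h$ as the fixed point of the contraction $\mathcal{T}$ on the space of bounded continuous maps, define $\bar h$ by the explicit Green's-function integral applied to the solution of the perturbed system, verify the invariance properties by differentiating the integral formulas (using the jump of $\mathcal G$ across the diagonal), and prove the two inversion identities by comparing the resulting integral representations — with $\bar H\circ H=\mathrm{Id}$ following directly and $H\circ\bar H=\mathrm{Id}$ requiring the contraction constant $q<1$, exactly as in the paper (albeit in the opposite order). The argument is correct; the only notable difference is the minor technical device you use to justify continuity of $\mathcal{T}h$ (the propagation identity $\mathcal G(t,s)=T(t,t_0)\mathcal G(t_0,s)$ for $s$ outside $[t_0,t]$ to obtain local-uniform domination), where the paper instead deduces continuity in $\tau$ from the ODE that $\hat h(\tau,\xi,\eta)$ satisfies and handles $(\xi,\eta)$ by dominated convergence at fixed $\tau$.
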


\begin{proof}
We will first establish the existence of $H$. Let $\mathcal Z$ denote the space of all continuous maps $h\colon \R \times X\times Y\to X$ with the property that 
\[
\lVert h\rVert_\infty :=\sup_{t, x , y}|h(t, x, y)|<\infty.
\]
Then, $(\mathcal Z, \lVert \cdot \rVert)$ is a Banach space. Given $h\in \mathcal Z$, we define 
\[\hat h(\tau,\xi,\eta)
 =\int^{\infty}_{-\infty}\mathcal G(\tau,s)f(s,x_1(s,\tau,\xi)+h(s,x_1(s,\tau,\xi),y(s,\tau,\eta)),y(s,\tau,\eta))\, ds,\]
for $\tau \in \R$ and $(\xi, \eta)\in X\times Y$.
We claim that $\hat h\in \mathcal Z$. Indeed, observe that 
\[
\hat h(\tau,\xi,\eta)=\int^{\infty}_{-\infty}\mathcal G(\tau,s)p(s,\tau,\xi,\eta)\, ds,
\]
where
\[
p(s,\tau,\xi,\eta)=f(s,x_1(s,\tau,\xi)+h(s,x_1(s,\tau,\xi),y(s,\tau,\eta)),y(s,\tau,\eta)).
\]
Then,  it follows from~\eqref{feq} and~\eqref{bound} that 
\[
|\hat h(\tau, \xi, \eta)| \le \int_{-\infty}^\infty |\mathcal G(\tau,s)| \mu(s)\, ds \le N, 
\]
for $\tau \in \R$ and $(\xi, \eta)\in X\times Y$. Now
\[
\hat h(\tau,\xi,\eta)=\int^{\tau}_{-\infty}T(\tau,s)P(s)p(s,\tau,\xi,\eta)\, ds
-\int_{\tau}^{\infty}T(\tau,s)(I-P(s))p(s,\tau,\xi,\eta)\, ds
\]
and so, by direct differentiation,
\[\begin{array}{rl}
 \displaystyle\frac{d}{d\tau}\hat h(\tau,\xi,\eta)
&=A(\tau)\hat h(\tau,\xi,\eta)+P(\tau)p(\tau,\tau,\xi,\eta)+(I-P(\tau))p(\tau,\tau,\xi,\eta)\\ \\

&=A(\tau)\hat h(\tau,\xi,\eta)+p(\tau,\tau,\xi,\eta).\end{array}\]
It follows that $\hat h(\tau,\xi,\eta)$ is continuous in $\tau$, locally uniformly
with respect to $(\xi,\eta)$. Continuity with respect to $(\xi,\eta)$ for each fixed $\tau$ follows from the dominated convergence theorem since $|{\mathcal G}(\tau,s)p(s,\tau,\xi,\eta)|
\le |{\mathcal G}(\tau,s)|\mu(s)$. Hence, $\hat h$ is continuous and 
 $\hat h\in \mathcal Z$. In addition, for $h_1, h_2\in \mathcal Z$ we have (using~\eqref{feq} and~\eqref{bound}) that 
\[ \|\hat h_1-\hat h_2\|_{\infty}\le q\|h_1-h_2\|_\infty.\]
We conclude that the map $T\colon \mathcal Z\to \mathcal Z$ defined by $T(h)=\hat h$, $h\in \mathcal Z$ is a contraction. Therefore, $T$ has a unique fixed point $h\in \mathcal Z$. Then,
\[h(\tau,\xi,\eta)
=\int^{\infty}_{-\infty}\mathcal G(\tau,s)f(s,x_1(s,\tau,\xi)+h(s,x_1(s,\tau,\xi),y(s,\tau,\eta)),y(s,\tau,\eta))\, ds,\]
for $\tau \in \R$ and $(\xi, \eta)\in X\times Y$. By using the identities
\begin{equation}\label{id1} y(t,s,y(s,\tau,\eta))=y(t,\tau,\eta) \end{equation}
and
\begin{equation}\label{id2} x_1(t,s,x_1(s,\tau,\xi))=x_1(t,\tau,\xi),\end{equation}
we have that 
\[\begin{array}{rl}
&h(t,x_1(t,\tau,\xi), y(t,\tau,\eta))\\ \\
&=\displaystyle\int^{\infty}_{-\infty}\mathcal G(t,s)f(s,x_1(s,\tau,\xi)+h(s,x_1(s,\tau,\xi),y(s,\tau,\eta)),y(s,\tau,\eta))\, ds.\end{array}\]
This implies that if $t\mapsto (x(t),y(t))$ is a solution of (\ref{Heq1}), then
\begin{equation}\label{Heq3}h(t,x(t),y(t))
=\int^{\infty}_{-\infty}\mathcal G(t,s)f(s,x(s)+h(s,x(s),y(s)),y(s))\, ds\end{equation}
so that
\[\begin{array}{rl}
h(t,x(t),y(t))
&=\displaystyle\int^{t}_{-\infty}T(t,s)P(s)f(s,x(s)+h(s,x(s),y(s)),y(s))\, ds\\ \\
&-\displaystyle\int_{t}^{\infty}T(t,s)(I-P(s))f(s,x(s)+h(s,x(s),y(s)),y(s))\, ds.\end{array}\]
By direct differentiation, we conclude that
\[ \frac{d}{dt}h(t,x(t),y(t))=A(t)h(t,x(t),y(t))+f(t,x(t)+h(t,x(t),y(t)),y(t)),\] 
and thus $t\mapsto (x(t)+h(t,x(t),y(t)),y(t))$ is a solution of (\ref{Heq2}). For $\tau \in \R$ and $(\xi, \eta)\in X\times Y$, we set
\[ H(\tau,\xi,\eta)=(\xi+h(\tau,\xi,\eta),\eta).\]
From the preceding discussion, we have that if $t\mapsto (x(t),y(t))$ is a solution of (\ref{Heq1}), then
$t\mapsto H(t,x(t),y(t))$ is a solution of (\ref{Heq2}).

We now establish  the existence of $\bar H$. Note the identity
\begin{equation}\label{id3} x_2(t,s,x_2(s,\tau,\xi,\eta),y(s,\tau,\eta))=x_2(t,\tau,\xi,\eta).
\end{equation}
Set 
\begin{equation}\label{90}\bar h(\tau,\xi,\eta)= -\int^{\infty}_{-\infty}\mathcal G(\tau,s)f(s,x_2(s,\tau,\xi,\eta),y(s,\tau,\eta))\, ds.\end{equation}
Similarly to $\hat h$, we can prove that $\bar h\in \mathcal Z$. Using the identities (\ref{id1}) and (\ref{id3}), we have that 
\[\bar h(t,x_2(t,\tau,\xi,\eta),y(t,\tau,\eta))
=-\int^{\infty}_{-\infty}\mathcal G(t,s)f(s,x_2(s,\tau,\xi,\eta),y(s,\tau,\eta))\, ds.\]
Hence, if $t\mapsto (x(t), y(t))$ is a solution of (\ref{Heq2}), we have that 
\begin{equation}\label{Heq4}\bar h(t,x(t),y(t))
= -\int^{\infty}_{-\infty}\mathcal G(t,s)f(s,x(s),y(s))ds.\end{equation}
By direct differentiation,
\[ \frac{d}{dt}\bar h(t,x(t),y(t))=A(t)\bar h(t,x(t),y(t))-f(t,x(t),y(t)),\]
and thus $t\mapsto (x(t)+\bar h(t,x(t),y(t)),y(t))$ is a solution of (\ref{Heq1}). For $\tau \in \R$ and $(\xi, \eta)\in X\times Y$, set
\[ \bar H(\tau,\xi,\eta)=(\xi+\bar h(\tau,\xi,\eta),\eta).\]
By the preceding discussion,  if $t\mapsto (x(t),y(t))$ is a solution of (\ref{Heq2}), then 
$t\mapsto \bar H(t,x(t),y(t))$ is a solution of (\ref{Heq1}).

Now we prove  that $H(t,\bar H(t,x,y))=(x,y)$ for $t\in \R$ and  $(x, y)\in X\times Y$. Fix $\tau \in \R$ and $(\xi, \eta)\in X\times Y$. Let $t\mapsto (x(t),y(t))$ be the solution of (\ref{Heq2}) such that $x(\tau)=\xi$, $y(\tau)=\eta$. Then $t\mapsto (z(t),y(t))=\bar H(t,x(t),y(t))$ is a solution of (\ref{Heq1}) and, using (\ref{Heq4}), \begin{equation}\label{Heq5}z(t)
=x(t)+\bar h(t,x(t),y(t))=x(t)-\displaystyle\int^{\infty}_{-\infty}\mathcal G(t,s)f(s,x(s),y(s))\, ds.\end{equation}
Moreover, $t\mapsto (u(t),y(t))=H(t,z(t),y(t))$ is a solution of (\ref{Heq2}) such that 
\[ (u(\tau),y(\tau))=H(\tau,\bar H(\tau,\xi,\eta)).\]
Then, using (\ref{Heq3}),
\[\begin{array}{rl}
u(t)
&=z(t)+h(t,z(t),y(t))\\ \\
&=z(t)+\displaystyle\int^{\infty}_{-\infty}\mathcal G(t,s)f(s,z(s)+h(s,z(s),y(s)),y(s))\, ds\\ \\
&=z(t)+\displaystyle\int^{\infty}_{-\infty}\mathcal G(t,s)f(s,u(s),y(s))\, ds.
\end{array}
\]
Hence, 
\begin{equation}\label{Heq6} z(t)=u(t)-\int^{\infty}_{-\infty}\mathcal G(t,s)f(s,u(s),y(s))\, ds.
\end{equation}
By comparing (\ref{Heq5}) and (\ref{Heq6}), we obtain that 
\[u(t)-x(t)=\int^{\infty}_{-\infty}\mathcal G(t,s)[f(s,u(s),y(s))-f(s,x(s),y(s))
]\, ds.
\]
Therefore, 
\[ |u(t)-x(t)|
\le \int^{\infty}_{-\infty}|\mathcal G(t,s)|\gamma(s)|u(s)-x(s)|\, ds,\]
which together with~\eqref{bound} implies that 
\[ \|u-x\|_{\infty}\le q\|u-x\|_{\infty}.\]
Since $q<1$, we conclude that $u=x$. So $(u(\tau),y(\tau))=(x(\tau),y(\tau))$, and thus
\[ H(\tau,\bar H(\tau,\xi,\eta))=(\xi,\eta),\]
as required.

In order to complete the proof of the theorem we show that  $\bar H(t,H(t,x,y))=(x, y)$ for $t\in \R$ and  $(x, y)\in X\times Y$. Fix $\tau \in \R$ and 
$(\xi, \eta)\in X\times Y$. Let $t\mapsto (x(t),y(t))$ be the solution of (\ref{Heq1}) such that $x(\tau)=\xi$, $y(\tau)=\eta$. Then 
$t\mapsto (z(t),y(t))=H(t,x(t),y(t))$ is a solution of (\ref{Heq2})
and, using (\ref{Heq3}),
\[\begin{array}{rl}
z(t)
&=x(t)+h(t,x(t),y(t))\\ \\
&=x(t)+\displaystyle\int^{\infty}_{-\infty}\mathcal G(t,s)f(s,x(s)+h(s,x(s),y(s)),y(s)) \, ds.
\end{array}\]
Hence, 
\begin{equation}\label{Heq7} z(t)=x(t)+\displaystyle\int^{\infty}_{-\infty}\mathcal G(t,s)f(s,z(s),y(s))\, ds.
\end{equation}
Furthermore,  $t\mapsto (u(t),y(t))=\bar H(t,z(t),y(t))$ is a solution of (\ref{Heq1}) such that
\[ (u(\tau),y(\tau))=\bar H(\tau,H(\tau,\xi,\eta)).\]
Then, using (\ref{Heq4}),
\[u(t)
=z(t)+\bar h(t,z(t),y(t))
=z(t)-\displaystyle\int^{\infty}_{-\infty}\mathcal G(t,s)f(s,z(s),y(s))\, ds,
\]
and therefore
\begin{equation}\label{Heq8} z(t)=u(t)+\int^{\infty}_{-\infty}\mathcal G(t,s)f(s,z(s),y(s))\, ds.
\end{equation}
By comparing the two expressions (\ref{Heq7}) and (\ref{Heq8}) for $z(t)$, we conclude that $u=x$. Hence $(u(\tau),y(\tau))=(x(\tau),y(\tau))$, and therefore
\[ \bar H(\tau,H(\tau,\xi,\eta))=(\xi,\eta),\]
as required. The proof of the theorem is completed.
\end{proof}

\begin{remark}
In addition to the assumptions in the statement of Theorem~\ref{t1}, let us suppose that there exists $T_0>0$ such that 
\[
A(t+T_0)=A(t), \ g(t+T_0, y)=g(t,y) \ \text{and} \ f(t+T_0, x, y)=f(t, x, y), 
\]
for $t\in \R$, $y\in Y$ and $(x, y)\in X\times Y$ and that ${\mathcal G}(t+T_0,s+T_0)={\mathcal G}(t,s)$ for all $t$ and $s$ (the latter holds
if and only if $P(s+T_0)=P(s)$ for all $s$).  Then, there exist $H$ and $\bar H$ as in the statement of Theorem~\ref{t1} satisfying 
\begin{equation}\label{new0}
H(t+T_0, x, y)=H(t, x, y) \quad \text{and} \quad \bar H(t+T_0, x, y)=\bar H(x, y), 
\end{equation}
for $t\in \R$ and $(x, y)\in X\times Y$. Indeed, this can be proved by slightly adjusting the proof of Theorem~\ref{t1}. Firstly, one can easily show that 
\begin{equation}\label{new1}
(x_1(t+T_0,\tau+T_0,\xi), y(t+T_0,\tau+T_0,\eta))=(x_1(t,\tau,\xi), y(t,\tau,\eta))
\end{equation}
and 
\begin{equation}\label{new2}
 (x_2(t+T_0,\tau+T_0,\xi,\eta), y(t+T_0,\tau+T_0,\eta))= (x_2(t,\tau,\xi,\eta), y(t,\tau,\eta)),
\end{equation}
for $t, \tau \in \R$ and $(\xi,\eta)\in X\times Y$. Let $\mathcal Z$ and $T$ be as in the proof of Theorem~\ref{t1}. Furthermore, let $\mathcal Z^{T_0}$ be the set of all $h\in \mathcal Z$ such that $h(t+T_0, x, y)=h(t,x,y)$ for $t\in \R$ and $(x, y)\in X\times Y$. Then, $\mathcal Z^{T_0}$ is a closed subset of
$\mathcal Z$. Using~\eqref{new1}, it is easy to show that $T(\mathcal Z^{T_0})\subset \mathcal Z^{T_0}$, which yields the first equality in~\eqref{new0}.  
Moreover, \eqref{90} and~\eqref{new2} imply that the second equality in~\eqref{new0} also holds. In particular, in the the autonomous case,
the functions $H$ and $\bar H$ are independent of $t$ provided that $P(s)$ is constant.

\end{remark}

\begin{remark}
The proof of Theorem~\ref{t1} is inspired by the work of Reinfelds and Steinberga~\cite{RS}. In particular, the main result in~\cite{RS} is  applicable to the problem of  the topological equivalence between systems
\[
\dot x=A(t)x+f(t,x)
\]
and
\[
\dot x=A(t)x.
\]
Under the same assumptions as in the present paper (see~\eqref{bound}), it follows from~\cite[Theorem 3]{RS} that the  above systems are topologically equivalent. 

Besides considering the more general systems~\eqref{Heq2} and~\eqref{Heq1}, we also provide some additional details (in comparison to the 
proof of~\cite[Theorem 3]{RS}). Namely, in~\cite{RS} it was not explicitly proved that the conjugacies $H$ and $\bar H$ are inverses of each other.
\end{remark}

\begin{remark}
We stress that the result analogous to Theorem~\ref{t1} was established by Shi and Xiong~\cite{SX} (see also~\cite{XWKR}) under the assumption that $\dot x=A(t)x$ admits an exponential dichotomy (see Subsection~\ref{EX}). In addition, it is assumed in~\cite{SX} that $\dot y=g(t,y)$ is a linear system. 
\end{remark}
The following example (essentially taken from~\cite{RS}) shows that Theorem~\ref{t1} is much more general from the above described result of Shi and Xiong~\cite{SX}.
\begin{example}\label{exp}
Let $X=\R^3$ and for $t\in \R$, set
\[
A(t)=\begin{pmatrix}
0 & -1 & 0 \\
1 & 0 & 0 \\
0 & 0 & \frac{-2t}{1+t^2} 
\end{pmatrix} \quad  \text{and}  \quad
P(t)=\begin{pmatrix}
0 & 0 &0 \\
0 & 0 & 0 \\
0 &0 & 1
\end{pmatrix}.
\]
One can easily show (see~\cite[Section 4]{RS}) that~\eqref{bound} holds  whenever 
\[
\int_{-\infty}^\infty (1+s^2) \mu(s) \, ds<+\infty \quad \text{and} \quad \int_{-\infty}^\infty (1+s^2)\gamma (s)\, ds <1.
\]
Moreover, as observed in~\cite{RS}, $\dot x=A(t)x$ does not admit an exponential dichotomy (or even the so-called ordinary dichotomy~\cite[Chapter 2]{C}).
\end{example}

In the subsections below on H\" older continuity, we assume that $A(t)$, $f(t,x,y)$ and $g(t,y)$ satisfy the same conditions
as assumed in Theorem \ref{t1} and that ${\mathcal G}(t,s)$ is as defined as in (\ref{G}). However other conditions may now be added. 

\subsection{H\" older continuity of $H$ and $\bar H$ in $x$}\label{hcx}
We suppose there exist a continuous function $\Delta_1(t,s)>0$ such that for all $t$ and $s$,
\begin{equation}\label{T} |T(t,s)|\le \Delta_1(t,s)\end{equation}
and a continuous function $\Delta_2(t,s)>0$ such that
if $t\mapsto (x(t),y(t))$ and $t\mapsto (z(t),y(t))$ are solutions of (\ref{Heq2}),
then for all $t$ and $s$,
\begin{equation}\label{oo}  |x(t)-z(t)|\le \Delta_2(t,s)|x(s)-z(s)|.\end{equation}
Furthermore, we assume there exists $M\ge 1$ and a continuous function $\varepsilon \colon \R \to (0, \infty)$  such that 
\begin{equation}\label{a1} |f(t,x,y)|\le M\end{equation}
and 
\begin{equation}\label{a2} |f(t,x_1,y)-f(t,x_2,y)| \le \varepsilon (t)|x_1-x_2|,\end{equation}
for $t\in \R$, $x, x_1, x_2\in X$ and $y\in Y$. In addition, we assume that there exists $N\ge 1$  such that \begin{equation}\label{ap} \varepsilon(t)\le N.\end{equation} Finally, we suppose that
\begin{equation}\label{r} \sup_{t\in \R} \int^{\infty}_{-\infty}|\mathcal G(t,s)|ds<\infty, \quad 
q:=\sup_{t\in \R}   \int^{\infty}_{-\infty}|\mathcal G(t,s)|\varepsilon(s)ds<1.\end{equation}
Observe that the above condition implies that~\eqref{bound} holds and thus Theorem~\ref{t1} is valid. Hence,  there is a function $H(t,x,y)=(x+h(t,x,y),y)$ sending the solutions of
(\ref{Heq1}) onto the solutions of (\ref{Heq2}) and a function $\bar H(t,x,y)=(x+\bar h(t,x,y),y)$ sending the solutions of
(\ref{Heq2}) onto the solutions of (\ref{Heq1}).

\begin{theorem}\label{t2}
 Let $C>0$ and $0<\alpha<1$ be given. Then if 
\begin{equation}\label{c1} \max\{2M,N\}(1+C)\sup_{t\in \R}\int^{\infty}_{-\infty}|\mathcal G(t,s)|\varepsilon^{\alpha}(s)\Delta^{\alpha}_1(s,t)\, ds
\le C,\end{equation}
we have
\[ |h(t,x_1,y)-h(t,x_2,y)|\le C|x_1-x_2|^{\alpha}, \quad \text{for $t\in \R$, $x_1, x_2\in X$ and $y\in Y$.}\]
Moreover, if
\begin{equation}\label{c2} 2M \sup_{t\in \R} \int^{\infty}_{-\infty}|\mathcal G(t,s)|\varepsilon^{\alpha}(s)\Delta^{\alpha}_2(s,t)\, ds
\le C,\end{equation}
then
\[ |\bar h(t,x_1,y)-\bar h(t,x_2,y)|\le C|x_1-x_2|^{\alpha}, \quad \text{for $t\in \R$, $x_1, x_2\in X$ and $y\in Y$.}\]
\end{theorem}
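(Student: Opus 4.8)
The plan is to exploit the fixed-point characterizations of $h$ and $\bar h$ already established in Theorem~\ref{t1}, together with the integral formulas \eqref{Heq3} and \eqref{Heq4} evaluated along solutions, and to estimate differences using a standard interpolation trick: bound the difference $|f(s,\cdot,\cdot)-f(s,\cdot,\cdot)|$ by the $\alpha$-power of the Lipschitz estimate \eqref{a2} and by (twice) the sup bound \eqref{a1}, and combine these as $|u-v|\le \min\{\varepsilon(s)|u-v|, 2M\}\le \varepsilon(s)^\alpha|u-v|^\alpha(2M)^{1-\alpha}$, or more simply $\le (\varepsilon(s)|u-v|)^\alpha(2M)^{1-\alpha}$. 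Since $2M\ge 1$ and $N\ge 1$ (via \eqref{ap}) we can absorb the $(2M)^{1-\alpha}$, $N^{1-\alpha}$ factors crudely into $\max\{2M,N\}$, which is why that constant appears in \eqref{c1}.

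For the estimate on $h$: fix $t$, $y$, and points $x_1,x_2\in X$. Let $x_1(s):=x_1(s,t,x_i)$ be the two solutions of $\dot x=A(t)x$ through $x_i$ at time $t$; by \eqref{T}, $|x_1(s,t,x_1)-x_1(s,t,x_2)|\le\Delta_1(s,t)|x_1-x_2|$. Using the defining identity for $h$,
\[
h(t,x_i,y)=\int_{-\infty}^{\infty}\mathcal G(t,s)\,f\bigl(s,\,x_1(s,t,x_i)+h(s,x_1(s,t,x_i),y(s,t,y)),\,y(s,t,y)\bigr)\,ds,
\]
subtract the two and apply \eqref{a2} to get
\[
|h(t,x_1,y)-h(t,x_2,y)|\le\int_{-\infty}^{\infty}|\mathcal G(t,s)|\,\varepsilon(s)\,\bigl(|x_1(s,t,x_1)-x_1(s,t,x_2)|+|h(\cdot)-h(\cdot)|\bigr)\,ds.
\]
The term $|h(\cdot)-h(\cdot)|$ is not yet controlled, so instead of Gronwall I would run a fixed-point / a~priori estimate: let $L$ be the smallest constant for which the desired H\"older bound holds over all $t,y,x_1,x_2$ (it is finite because $h$ is bounded, taking $L$ large enough relative to any fixed reference scale — or better, set up the contraction directly on the subset of $\mathcal Z$ of functions satisfying the bound and show $T$ preserves it). Then $|h(s,x_1(s),y(s))-h(s,x_2(s),y(s))|\le L\,|x_1(s,t,x_1)-x_1(s,t,x_2)|^\alpha\le L\Delta_1(s,t)^\alpha|x_1-x_2|^\alpha$. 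Plugging in, using $|x_1(s,t,x_1)-x_1(s,t,x_2)|\le\Delta_1(s,t)|x_1-x_2|\le\Delta_1(s,t)^\alpha|x_1-x_2|^\alpha$ when $|x_1-x_2|\le 1$ — and handling $|x_1-x_2|>1$ separately via the sup bound, or simply interpolating $\varepsilon(s)|w|\le(\varepsilon(s)|w|)^\alpha(2M)^{1-\alpha}$ since $\varepsilon(s)|w|\le 2M$ whenever $|w|=|x_1(s)+h-x_2(s)-h|$ arises inside $f$ with that difference bounded — one arrives at
\[
|h(t,x_1,y)-h(t,x_2,y)|\le \max\{2M,N\}\,(1+L)\,|x_1-x_2|^\alpha\sup_{t}\int_{-\infty}^{\infty}|\mathcal G(t,s)|\varepsilon^\alpha(s)\Delta_1^\alpha(s,t)\,ds.
\]
If $L$ is defined as the optimal constant, this forces $L\le\max\{2M,N\}(1+L)\sup_t\int|\mathcal G|\varepsilon^\alpha\Delta_1^\alpha$, and hypothesis \eqref{c1} (with $C$ in place of $L$) closes the loop: one checks $C$ itself satisfies the self-consistent inequality, hence $L\le C$. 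Cleaner is to verify directly that $T$ maps $\{h\in\mathcal Z:\ \|h\|_\infty\le N,\ |h(t,x_1,y)-h(t,x_2,y)|\le C|x_1-x_2|^\alpha\}$ into itself, which is a closed subset of $\mathcal Z$, so the unique fixed point lies in it.

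For $\bar h$, the argument is the same but easier: from \eqref{90}, $\bar h(t,\xi,\eta)=-\int\mathcal G(t,s)f(s,x_2(s,t,\xi,\eta),y(s,t,\eta))\,ds$, and now the only $(\xi)$-dependence inside $f$ is through $x_2(s,t,\cdot,\eta)$, whose difference is controlled by \eqref{oo} as $\Delta_2(s,t)|x_1-x_2|$; there is no self-referential $\bar h$ term. So directly
\[
|\bar h(t,x_1,y)-\bar h(t,x_2,y)|\le\int_{-\infty}^{\infty}|\mathcal G(t,s)|\,\varepsilon(s)\,\Delta_2(s,t)\,|x_1-x_2|\,ds,
\]
and interpolating $\varepsilon(s)\Delta_2(s,t)|x_1-x_2|\le 2M$ (since this is exactly $|f(s,x_2(s,t,x_1),y)-f(s,x_2(s,t,x_2),y)|\le 2M$ by \eqref{a1}) gives the factor $(\varepsilon(s)\Delta_2(s,t)|x_1-x_2|)^\alpha(2M)^{1-\alpha}$, whence
\[
|\bar h(t,x_1,y)-\bar h(t,x_2,y)|\le(2M)^{1-\alpha}|x_1-x_2|^\alpha\sup_t\int_{-\infty}^{\infty}|\mathcal G(t,s)|\varepsilon^\alpha(s)\Delta_2^\alpha(s,t)\,ds\le C|x_1-x_2|^\alpha
\]
by \eqref{c2}, using $2M\ge 1$ so $(2M)^{1-\alpha}\le 2M$.

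The main obstacle is the self-referential term in the estimate for $h$: because $h$ appears inside $f$ in its own defining equation, one cannot estimate $|h(t,x_1,y)-h(t,x_2,y)|$ by a bare integral inequality and Gronwall — the natural bound involves $|h(s,\cdot,\cdot)-h(s,\cdot,\cdot)|$ on the right. The resolution is to incorporate the H\"older bound into the invariant set for the contraction $T$ (or equivalently to bootstrap via the optimal-constant argument), which is exactly what produces the $(1+C)$ factor in \eqref{c1}. A secondary technical point is the interpolation $|f(s,u,y)-f(s,v,y)|\le(\varepsilon(s)|u-v|)^\alpha(2M)^{1-\alpha}$, valid because $f$ is both $2M$-bounded in differences and $\varepsilon(s)$-Lipschitz; this is the step that converts the Lipschitz-type kernel into the H\"older-type kernel $|\mathcal G|\varepsilon^\alpha\Delta_i^\alpha$ and requires only that we keep careful track of which quantity plays the role of $|u-v|$ in each case.
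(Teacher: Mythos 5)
Your proposal is in essence the paper's own argument: use the fixed-point characterization of $h$, form the closed subset $\mathcal Z'\subset\mathcal Z$ of maps satisfying the $C$-Hölder bound in $x$, show $T(\mathcal Z')\subset\mathcal Z'$ by the interpolation $|f(s,u,y)-f(s,v,y)|\le 2M\varepsilon^\alpha(s)|u-v|^\alpha$ combined with a case split, and then propagate through $\Delta_1$; for $\bar h$ read off the Hölder estimate directly from the closed-form integral \eqref{90}. This is precisely the structure of the paper's proof, and your closing remark that the cleanest route is to verify invariance of the closed subset is exactly what the paper does.

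One step in your sketch is off as written. You propose, in the case $|x_1-x_2|\le 1$, the chain $|x_1(s,t,x_1)-x_1(s,t,x_2)|\le\Delta_1(s,t)|x_1-x_2|\le\Delta_1(s,t)^\alpha|x_1-x_2|^\alpha$. The second inequality would require $\Delta_1(s,t)|x_1-x_2|\le 1$, not merely $|x_1-x_2|\le 1$, so it fails whenever $\Delta_1(s,t)>1$. The correct order of operations (which the paper uses in \eqref{tat}) is: first establish, for abstract points $u_1,u_2$ plugged into $f$, the pointwise estimate
\[
|f(s,u_1+\psi(s,u_1,y),y)-f(s,u_2+\psi(s,u_2,y),y)|\le\max\{2M,N\}\,\varepsilon^\alpha(s)(1+C)\,|u_1-u_2|^\alpha,
\]
obtained by taking the minimum of the Lipschitz bound and the $2M\varepsilon^\alpha$ bound, using $|\psi(s,u_1,y)-\psi(s,u_2,y)|\le C|u_1-u_2|^\alpha$, and then splitting on whether $|u_1-u_2|\le 1$ or $>1$. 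Only afterwards substitute $u_i=x_1(s,t,\xi_i)$ and use $|u_1-u_2|\le\Delta_1(s,t)|\xi_1-\xi_2|$; the $\alpha$-power then distributes over the product $\Delta_1(s,t)^\alpha|\xi_1-\xi_2|^\alpha$ with no restriction on $\Delta_1$. With that reordering your argument closes exactly as in the paper, and the $\bar h$ half is already correct as you wrote it.
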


\begin{proof}
We begin by observing that~\eqref{a1} and~\eqref{a2} imply that 
\begin{equation}\label{Heq9} \begin{array}{rl}
|f(t,x,y)-f(t,z,y)|
&=|f(t,x,y)-f(t,z,y)|^{1-\alpha}|f(t,x,y)-f(t,z,y)|^{\alpha}\\ \\
&\le 2M\varepsilon^{\alpha}(t)|x-z|^{\alpha},
\end{array}\end{equation}
for $t\in \R$, $x, z\in X$ and $y\in Y$. 

Let $\mathcal Z$ be as in the proof of Theorem~\ref{t1}. Furthermore, let $\mathcal Z'$ denote the set of all $\psi \in \mathcal Z$  such that
\[
|\psi(t,x_1, y)-\psi(t,x_2, y)|\le C|x_1-x_2|^\alpha, \quad \text{for $t\in \R$, $x_1, x_2\in X$ and $y\in Y$.}
\]
Then, $\mathcal Z'$ is a closed subset of $\mathcal Z$. We now prove that $T(\mathcal Z')\subset \mathcal Z'$, where $T$ is as in the proof of Theorem~\ref{t1}. We recall that 
\begin{equation}\label{opT}
(T\psi)(\tau, \xi, \eta)= \int^{\infty}_{-\infty}\mathcal G(\tau,s)f(s,x_1(s,\tau,\xi)+\psi (s,x_1(s,\tau,\xi),y(s,\tau,\eta)),y(s,\tau,\eta))\, ds,
\end{equation}
for $\tau \in \R$ and $(\xi, \eta) \in X \times Y$.
 Take an arbitrary $\psi \in \mathcal Z'$. We observe  (using~\eqref{a2}, \eqref{ap} and~\eqref{Heq9})  that 
\begin{equation}\label{tat}
\begin{split}
&|f(t,x_1+\psi(t,x_1,y),y)-f(t,x_2+\psi(t,x_2,y),y) | \\
&\le \min\{\varepsilon(t)[|x_1-x_2|+|\psi(t,x_1,y)-\psi(t,x_2,y)|],\\ 
&\qquad 2M\varepsilon^{\alpha}(t)[|x_1-x_2|+|\psi(t,x_1,y)-\psi(t,x_2,y)|]^{\alpha}\} \\
&\le M_1\varepsilon^{\alpha}(t)\min\{|x_1-x_2|+C|x_1-x_2|^{\alpha},  [|x_1-x_2|+C|x_1-x_2|^{\alpha}]^{\alpha}\}\\ 
&\le  M_1\varepsilon^{\alpha}(t) \begin{cases}
(1+C)^\alpha|x_1-x_2|^{\alpha} & \text{if $|x_1-x_2|>1$; (taking the right one)} \\
(1+C)|x_1-x_2|^{\alpha} & \text{if $|x_1-x_2|\le 1$ (taking the left one)}
\end{cases}\\
&\le M_1\varepsilon^{\alpha}(t)(1+C)|x_1-x_2|^{\alpha},
\end{split}
\end{equation}
where $M_1=\max\{N,2M\}$. Now, 
\[ (T\psi)(t,\xi_1,\eta)-(T\psi)(t,\xi_2,\eta)=\int^{\infty}_{-\infty}\mathcal G(t,s)p(s)\, ds,\]
where
\[\begin{split} p(s)
&= f(s,x_1(s,t,\xi_1)+\psi(s,x_1(s,t,\xi_1),y(s,t,\eta)),y(s,t,\eta))\\ 
&\phantom{=} -f(s,x_1(s,t,\xi_2)+\psi(s,x_1(s,t,\xi_2),y(s,t,\eta)),y(s,t,\eta)).
\end{split}\]
Using~\eqref{tat}, we see that 
\[ \begin{split}
|p(s)|
&\le M_1\varepsilon^{\alpha}(s)(1+C)|x_1(s,t,\xi_2)-x_1(s,t,\xi_1)|^{\alpha}\ \\ 
&\le M_1\varepsilon^{\alpha}(s)(1+C)[\Delta_1(s,t)|\xi_1-\xi_2|]^{\alpha}\\ 
&= M_1\varepsilon^{\alpha}(s)(1+C)\Delta^{\alpha}_1(s,t)|\xi_1-\xi_2|^{\alpha}.
\end{split}\]
Therefore, by~\eqref{c1} we have that 
\[\begin{split}
& |(T\psi)(t,\xi_1,\eta)-(T\psi)(t,\xi_2,\eta)|\\ 
&\le \displaystyle M_1(1+C)|\xi_1-\xi_2|^{\alpha} \sup_{t\in \R}\int^{\infty}_{-\infty}|\mathcal G(t,s)|\varepsilon^{\alpha}(s)\Delta^{\alpha}_1(s,t)\, ds
\\ 
&\le C|\xi_1-\xi_2|^{\alpha},
\end{split}\]
for $t\in \R$, $\xi_1, \xi_2\in X$ and $\eta \in Y$. Therefore, $T\psi \in \mathcal Z'$. Consequently, the unique fixed point $h$ of $T$ belongs to $\mathcal Z'$, which implies the first assertion of the theorem.

In order to establish the second assertion, we recall (see the proof of Theorem~\ref{t1}) that 
\[\bar h(t,\xi,\eta)= -\int^{\infty}_{-\infty}\mathcal G(t,s)f(s,x_2(s,t,\xi,\eta),y(s,t,\eta))\, ds.
\]
Then
\[ \bar h(t,\xi_1,\eta)-\bar h(t,\xi_2,\eta)
=\int^{\infty}_{-\infty}\mathcal G(t,s)p(s)ds,\]
where
\[ p(s)=f(s,x_2(s,t,\xi_2,\eta),y(s,t,\eta))-f(s,x_2(s,t,\xi_1,\eta),y(s,t,\eta)).\]
By~\eqref{Heq9}, we have that 
\[ |p(s)|\le 2M\varepsilon^{\alpha}(s)|x_2(s,t,\xi_2,\eta)-x_2(s,t,\xi_1,\eta)|^{\alpha} 
\le 2M\varepsilon^{\alpha}(s)\Delta_2^{\alpha}(s,t)|\xi_1-\xi_2|^{\alpha}.\]
Consequently, using~\eqref{c2} we conclude that 
\[\begin{split}
& |\bar h(t,\xi_1,\eta)-\bar h(t,\xi_2,\eta)|\\ 
&\le \displaystyle 2M |\xi_1-\xi_2|^{\alpha}\int^{\infty}_{-\infty}|\mathcal G(t,s)|\varepsilon^{\alpha}(s)\Delta^{\alpha}_2(s,t)\, ds
\\ 
&\le C|\xi_1-\xi_2|^{\alpha},
\end{split}\]
for $t\in \R$, $\xi_1, \xi_2\in X$ and $\eta\in Y$. The proof of the theorem is completed.
\end{proof}

\begin{remark}\label{rm1}
Let us equip $X\times Y$ with the norm $|(x,y)|=|x|+|y|$, for $(x, y)\in X\times Y$.
We observe that under~\eqref{c1}, $H$ is H\" older continuous in $x$ on every bounded subset of $X$. Moreover, \eqref{c2} implies that $\bar H$ is H\" older continuous in $x$ on every bounded subset of $X$. Indeed, assume that~\eqref{c1} holds and that $\tilde X\subset X$ is bounded. 
Then, we have that 
\[
\begin{split}
|H(t,x_1,y)-H(t,x_2,y)| &=|(x_1+h(t,x_1, y))-(x_2+h(t,x_2, y))| \\
&\le |x_1-x_2|+|h(t,x_1, y)-h(t,x_2, y)| \\
&\le |x_1-x_2|+C|x_1-x_2|^\alpha \\
&=(|x_1-x_2|^{1-\alpha}+C)|x_1-x_2|^\alpha \\
&\le C' |x_1-x_2|^\alpha,
\end{split}
\]
for $t\in \R$, $x_1, x_2\in \tilde X$ and $y\in Y$, where
\[
C'=C+\sup_{x_1, x_2\in \tilde X} |x_1-x_2|^{1-\alpha}>0.
\]
The same argument applies for $\bar H$.
\end{remark}

\subsection{H\" older continuity of $H$ and $\bar H$ in $y$} We suppose that if  $t\mapsto y(t)$ and $t\mapsto w(t)$ are solutions of $\dot y=g(t,y)$, then 
\begin{equation}\label{sig} |y(t)-w(t)|\le \sigma(t,s)|y(s)-w(s)|,\end{equation}
for some continuous function $\sigma(t,s)>0$. In addition, we assume that there exists a continuous function $\Delta_3(t,s)>0$  such that if $t\mapsto (x(t),y(t))$
and $t\mapsto (z(t),w(t))$ are solutions of (\ref{Heq2}) with $x(s)=z(s)$, then for all $t$ and $s$
\begin{equation}\label{Del} |x(t)-z(t)|+|y(t)-w(t)|\le \Delta_3(t,s)|y(s)-w(s)|.\end{equation}
We continue to assume that~\eqref{a1} holds with $M\ge 1$.  Moreover, we assume that there exists a continuous function
$\varepsilon \colon \R \to (0, \infty)$ satisfying~\eqref{ap} (with some $N\ge 1$) and such that 
\begin{equation}\label{113} |f(t,x_1,y_1)-f(t,x_1,y_2)|\le \varepsilon(t)[|x_1-x_2|+|y_1-y_2|],\end{equation}
for $t\in \R$, $x_1, x_2\in X$ and $y_1, y_2\in Y$.
 Finally, suppose that~\eqref{r} holds. Hence, Theorem~\ref{t1} is again applicable and consequently there exist $H$ and $\bar H$ as in the statement of that result.
\begin{theorem}\label{t3}
 Let $C>0$ and $0<\alpha<1$ be given. Then, if 
\begin{equation}\label{c3}
\max\{2M,N\}(1+C)\sup_{t\in \R}\int^{\infty}_{-\infty}|\mathcal G(t,s)|
\varepsilon^{\alpha}(s)\sigma^{\alpha}(s,t)\, ds
	\le C,
\end{equation}
we have that
	\[ |h(t,x,y_1)-h(t,x,y_2)|\le C|y_1-y_2|^{\alpha}, \quad \text{for $t\in \R$, $x\in X$ and $y_1, y_2\in Y$.}\]
Moreover, if \begin{equation}\label{c44} 2M\sup_{t\in \R} \int^{\infty}_{-\infty}|
\mathcal G(t,s)|\varepsilon^{\alpha}(s)\Delta^{\alpha}_3(s,t)\, ds
	\le C,\end{equation}
then 
\[ |\bar h(t,x,y_1)-\bar h(t,x,y_2)|\le C|y_1-y_2|^{\alpha}, \quad \text{for $t\in \R$, $x\in X$ and $y_1, y_2\in Y$.}\]
\end{theorem}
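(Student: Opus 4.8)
The plan is to mirror the proof of Theorem~\ref{t2} almost line by line, with the variable $x$ replaced by $y$ and the bound~\eqref{T} on $|T(t,s)|$ replaced by the bounds~\eqref{sig} and~\eqref{Del}. As a preliminary step I would record, exactly as in~\eqref{Heq9}, that~\eqref{a1} together with the joint Lipschitz estimate~\eqref{113} yields
\[
|f(t,x_1,y_1)-f(t,x_2,y_2)|\le 2M\varepsilon^{\alpha}(t)\bigl[|x_1-x_2|+|y_1-y_2|\bigr]^{\alpha},
\]
for $t\in\R$, $x_1,x_2\in X$ and $y_1,y_2\in Y$. Let $\mathcal Z$ and $T$ be as in the proof of Theorem~\ref{t1}.

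For the first assertion, introduce the set $\mathcal Z''$ of all $\psi\in\mathcal Z$ with $|\psi(t,\xi,\eta_1)-\psi(t,\xi,\eta_2)|\le C|\eta_1-\eta_2|^{\alpha}$ for $t\in\R$, $\xi\in X$, $\eta_1,\eta_2\in Y$; this is a nonempty closed subset of $\mathcal Z$, so it suffices to show $T(\mathcal Z'')\subset\mathcal Z''$, whence the unique fixed point $h$ of $T$ lies in $\mathcal Z''$. Fix $\psi\in\mathcal Z''$. Since $x_1(s,\tau,\xi)$ solves the linear equation $\dot x=A(t)x$ and therefore does not depend on $\eta$, the factor $x_1(s,t,\xi)$ cancels inside the difference $(T\psi)(t,\xi,\eta_1)-(T\psi)(t,\xi,\eta_2)=\int^{\infty}_{-\infty}\mathcal G(t,s)p(s)\,ds$, and using~\eqref{113}, \eqref{a1}, \eqref{ap} and $\psi\in\mathcal Z''$ one estimates $p(s)$ by repeating the min/max case analysis of~\eqref{tat} verbatim, with $|y(s,t,\eta_1)-y(s,t,\eta_2)|$ playing the role of $|x_1-x_2|$, to get
\[
|p(s)|\le M_1\varepsilon^{\alpha}(s)(1+C)\,|y(s,t,\eta_1)-y(s,t,\eta_2)|^{\alpha}\le M_1\varepsilon^{\alpha}(s)(1+C)\sigma^{\alpha}(s,t)|\eta_1-\eta_2|^{\alpha},
\]
where $M_1=\max\{N,2M\}$ and the last step uses~\eqref{sig}. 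Integrating against $|\mathcal G(t,s)|$ and invoking~\eqref{c3} gives $|(T\psi)(t,\xi,\eta_1)-(T\psi)(t,\xi,\eta_2)|\le C|\eta_1-\eta_2|^{\alpha}$, so $T\psi\in\mathcal Z''$.

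For the second assertion I would use the explicit formula $\bar h(t,\xi,\eta)=-\int^{\infty}_{-\infty}\mathcal G(t,s)f(s,x_2(s,t,\xi,\eta),y(s,t,\eta))\,ds$ established in the proof of Theorem~\ref{t1}, so that $\bar h(t,\xi,\eta_1)-\bar h(t,\xi,\eta_2)=\int^{\infty}_{-\infty}\mathcal G(t,s)p(s)\,ds$ with $p(s)=f(s,x_2(s,t,\xi,\eta_2),y(s,t,\eta_2))-f(s,x_2(s,t,\xi,\eta_1),y(s,t,\eta_1))$. The two solutions $(x_2(\cdot,t,\xi,\eta_i),y(\cdot,t,\eta_i))$, $i=1,2$, of~\eqref{Heq2} agree in the $x$-coordinate at time $t$ (both equal $\xi$), so~\eqref{Del} applies with $t$ as the reference time and gives, for all $s$,
\[
|x_2(s,t,\xi,\eta_1)-x_2(s,t,\xi,\eta_2)|+|y(s,t,\eta_1)-y(s,t,\eta_2)|\le\Delta_3(s,t)|\eta_1-\eta_2|.
\]
Combining this with the $2M\varepsilon^{\alpha}$-estimate displayed above yields $|p(s)|\le 2M\varepsilon^{\alpha}(s)\Delta_3^{\alpha}(s,t)|\eta_1-\eta_2|^{\alpha}$, and integrating against $|\mathcal G(t,s)|$ and using~\eqref{c44} finishes the proof.

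The only delicate point is the one already present in~\eqref{tat}: one must split according to whether the relevant distance exceeds $1$ in order to absorb the sum $|\eta_1-\eta_2|$-type expression $r+Cr^{\alpha}$ into a constant multiple of $r^{\alpha}$, using $\varepsilon(s)\le N\varepsilon^{\alpha}(s)$ (valid by~\eqref{ap}, since $N\ge 1$) in the small-distance regime and $(1+C)^{\alpha}\le 1+C$ in the large-distance regime. Everything else is a routine transcription of the arguments already carried out for Theorem~\ref{t2}, the two new ingredients being the use of~\eqref{sig} for $h$ and of~\eqref{Del} for $\bar h$ in place of~\eqref{T} and~\eqref{oo}.
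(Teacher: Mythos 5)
Your proposal is correct and follows essentially the same route as the paper's own proof: both establish the preliminary Hölder-type bound for $f$ from~\eqref{a1} and~\eqref{113}, both define the closed set $\mathcal Z''$ and show $T$-invariance via the same $\min$/case-split argument using~\eqref{sig}, and both treat $\bar h$ directly from its explicit formula by invoking~\eqref{Del} (noting that the two solutions share the $x$-value $\xi$ at time $t$). The details you flag as delicate (the case split on whether the distance exceeds $1$, and $\varepsilon\le N\varepsilon^{\alpha}$) are precisely the ones the paper handles in the displayed estimate preceding the definition of $M_1$.
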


\begin{proof}
We begin by observing that~\eqref{a1} and~\eqref{113} imply that 
\begin{equation}\label{Heq11} \begin{array}{rl}
&|f(t,x_1,y_1)-f(t,x_2,y_2)|\\ \\
&=|f(t,x_1,y_1)-f(t,x_2,y_2)|^{1-\alpha}|f(t,x_1,y_1)-f(t,x_2,y_2)|^{\alpha}\\ \\
&\le 2M\varepsilon^{\alpha}(t)[|x_1-x_2|+|y_1-y_2|]^{\alpha},
\end{array}\end{equation}
for $t\in \R$, $x_1, x_2\in X$ and $y_1, y_2\in Y$. 

Let $\mathcal Z''$ denote the set of all $\psi \in \mathcal Z$ (where $\mathcal Z$ is again as in the proof of Theorem~\ref{t1}) such that 
\[ |\psi(t,x,y_1)-\psi(t,x,y_2)|\le C|y_1-y_2|^{\alpha}, \quad \text{for $t\in \R$, $x\in X$ and $y_1, y_2\in Y$.}\]
Then, $\mathcal Z''$ is a closed subset of $\mathcal Z$. We now prove that $T(\mathcal Z'')\subset \mathcal Z''$, where $T$ is as in the proof of Theorem~\ref{t1} (see~\eqref{opT}). Take an arbitrary $\psi\in \mathcal Z''$. By~\eqref{ap}, \eqref{113} and~\eqref{Heq11}, we have that 
\begin{equation}\label{op}
\begin{split}
&|f(t,x+\psi(t,x,y_1),y_1)-f(t,x+\psi(t,x,y_2),y_2) |\\ 
&\le \min\{\varepsilon(t)[|\psi(t,x,y_1)-\psi(t,x,y_2)|+|y_1-y_2|],\\
&\phantom{\le} \qquad 2M\varepsilon^{\alpha}(t)[|\psi(t,x,y_1)-\psi(t,x,y_2)|+|y_1-y_2|]^{\alpha}\}\\ 
&\le M_1\varepsilon^{\alpha}(t)\min\{|y_1-y_2|+C|y_1-y_2|^{\alpha},  [|y_1-y_2|+C|y_1-y_2|^{\alpha}]^{\alpha}\}\\ 
&\le  M_1\varepsilon^{\alpha}(t) \begin{cases}
(1+C)^\alpha|y_1-y_2|^{\alpha} & \text{if $|y_1-y_2|>1$; (taking the one on the right side)} \\
(1+C)|y_1-y_2|^{\alpha} & \text{if $|y_1-y_2|\le 1$ (taking the one on the left side)}
\end{cases}\\
&\le M_1\varepsilon^{\alpha}(t)(1+C)|y_1-y_2|^{\alpha},
\end{split}
\end{equation}
where $M_1=\max \{2M, N\}$. Then,
\[ (T\psi)(t,\xi,\eta_1)-(T\psi)(t,\xi,\eta_2)=\int^{\infty}_{-\infty}\mathcal G(t,s)p(s)\, ds,\]
where
\[\begin{split}
p(s)&= f(s,x_1(s,t,\xi)+\psi(s,x_1(s,t,\xi),y(s,t,\eta_1)),y(s,t,\eta_1))\\ 
&\phantom{=} -f(s,x_1(s,t,\xi)+\psi(s,x_1(s,t,\xi),y(s,t,\eta_2)),y(s,t,\eta_2)).
\end{split}\]
By~\eqref{op}, we see that 
\[ \begin{split}
|p(s)|
&\le M_1\varepsilon^{\alpha}(s)(1+C)|y(s,t,\eta_1)-y(s,t,\eta_2)|^{\alpha}\\ 
&\le M_1\varepsilon^{\alpha}(s)(1+C)\sigma^{\alpha}(s,t)|\eta_1-\eta_2|^{\alpha}.
\end{split}\]
Hence, \eqref{c3} implies that 
\[\begin{split}
& |(T\psi)(t,\xi,\eta_1)-(T\psi)(t,\xi,\eta_2)|\\ 
&\le \displaystyle M_1(1+C) |\eta_1-\eta_2|^{\alpha} \sup_{t\in \R}\int^{\infty}_{-\infty}|\mathcal G(t,s)|\varepsilon^{\alpha}(s)\sigma^{\alpha}(s,t)\, ds
\\
&\le C|\eta_1-\eta_2|^{\alpha},
\end{split}\]
for $t\in \R$, $\xi \in X$ and $\eta_1, \eta_2\in Y$. Therefore, $T\psi \in \mathcal Z''$. Consequently, the unique fixed point $h$ of $T$ belongs to $\mathcal Z''$, which implies the first assertion of the theorem.

On the other hand, we recall from Theorem~\ref{t1} that $\bar h$ is given by
\[\bar h(t,\xi,\eta)=-\int^{\infty}_{-\infty}\mathcal G(t,s)f(s,x_2(s,t,\xi,\eta),y(s,t,\eta))\, ds.
\]
Thus,
\[ \bar h(t,\xi,\eta_1)-\bar h(t,\xi,\eta_2)=\int^{\infty}_{-\infty}\mathcal G(t,s)p(s)\, ds,\]
where
\[p(s)= f(s,x_2(s,t,\xi,\eta_2),y(s,t,\eta_2))-f(s,x_2(s,t,\xi,\eta_1),y(s,t,\eta_1)).
\]
Then, using (\ref{Heq11}) we have that 
\[ \begin{split}
|p(s)|
&\le 2M\varepsilon^{\alpha}(s)[|x_2(s,t,\xi,\eta_1)-x_2(s,t,\xi,\eta_2)|+|y(s,t,\eta_1)-y(s,t,\eta_2)|]^{\alpha}\\ 
&\le 2M\varepsilon^{\alpha}(s)[\Delta_3(s,t)|\eta_1-\eta_2|]^{\alpha}\\ 
&= 2M\varepsilon^{\alpha}(s)\Delta^{\alpha}_3(s,t)|\eta_1-\eta_2|^{\alpha}.
\end{split}\]
Therefore, \eqref{c44} implies that 
\[\begin{split}
& |\bar h (t,\xi,\eta_1)-\bar h(t,\xi,\eta_2)|\\ 
&\le \displaystyle 2M |\eta_1-\eta_2|^{\alpha}\sup_{t\in \R}\int^{\infty}_{-\infty}|
\mathcal G(t,s)|\varepsilon^{\alpha}(s)\Delta^{\alpha}_3(s,t)\, ds
\\ 
&\le C|\eta_1-\eta_2|^{\alpha},
\end{split}\]
which establishes the second assertion of the theorem. 
\end{proof}

\begin{remark}\label{rm2}
As in Remark~\ref{rm1}, we have that~\eqref{c3} implies that $H$ is H\" older continuous in $y$ on each bounded subset of $Y$ and that under~\eqref{c44}, $\bar H$ is H\" older continuous in $y$ on each bounded subset on $Y$.
\end{remark}

\begin{remark}\label{rm3}
Moreover, \eqref{c1} and~\eqref{c3} imply that $H$ is H\" older continuous in $(x,y)$ on each bounded subset of $X\times Y$. Similarly, \eqref{c2} and~\eqref{c44} imply the same for $\bar H$.  Indeed, take $Q\subset X\times Y$ bounded. Then,
\[
\begin{split}
&|H(t,x_1, y_1)-H(t,x_2,y_2)| \\
 &=|(x_1+h(t,x_1, y_1), y_1)-(x_2+h(t,x_2, y_2), y_2)| \\
&\le |x_1-x_2|+|h(t,x_1, y_1)-h(t,x_2, y_2)|+|y_1-y_2| \\
&\le |x_1-x_2|+|y_1-y_2|+|h(t,x_1, y_1)-h(t,x_2, y_1)|+|h(t,x_2, y_1)-h(t,x_2, y_2)| \\
&\le |x_1-x_2|+|y_1-y_2|+C|x_1-x_2|^\alpha +C|y_1-y_2|^\alpha \\
&=(|x_1-x_2|^{1-\alpha}+C)|x_1-x_2|^\alpha +(|y_1-y_2|^{1-\alpha}+C)|y_1-y_2|^\alpha  \\
&\le C' |(x_1,y_1)-(x_2,y_2)|^\alpha, 
\end{split}
\]
for $t\in \R$, $(x_1,y_1), (x_2, y_2)\in Q$, where 
\[
C'=2C+\sup_{(x_1, y_1), (x_2, y_2)\in Q} (|x_1-x_2|^{1-\alpha}+|y_1-y_2|^{1-\alpha}).
\]
The same argument applies for $\bar H$.
\end{remark}

\subsection{An example}\label{EX}
Let us now discuss a particular example to which Theorems~\ref{t1}, \ref{t2} and~\ref{t3} are applicable. 

First we recall a definition. We assume that $X=\R^d$. We say that  $\dot x=A(t)x$ admits
an {\it exponential dichotomy} with projections $P(t)$, $t\in \R$ if  
\[
T(t,s)P(s)=P(t)T(t,s) \quad \text{for $t, s\in \R$,}
\] 
and there exist positive constants $D_1$, $D_2$, $\lambda_1$, $\lambda_2$ such that
\[
|T(t,s)P(s)|\le D_1e^{-\lambda_1(t-s)} \quad \text{and} \quad |T(s,t)(I-P(t))|\le D_2e^{-\lambda_2(t-s)},
\]
for $t\ge s$. 


In the corollaries below, we assume that $\dot x=A(t)x$ satisfies an exponential 
dichotomy and also some bounded growth and decay conditions. Following the
corollaries, we describe conditions on the Sacker-Sell spectrum which ensure these
conditions hold. Note the exponential dichotomy condition leads to the condition
on the Green's function; the bounded growth and decay conditions lead to the conditions
on the $\Delta_i(t,s)$ functions. In the first corollary, we give conditions ensuring
the existence of a linearization and H\" older continuity in $x$.

\begin{corollary}\label{KOR}
Assume that the following conditions hold:
\begin{enumerate}
\item There exist positive constants $K_1$, $K_2$, $D_1$, $D_2$, $a_2\ge\lambda_1$, $a_1\ge\lambda_2$
 and projections $P(t)$, $t\in \R$ such that for all $t$ and $s$
  \[ T(t,s)P(s)=P(t)T(t,s)\]
and for $t\ge s$
\begin{equation}\label{est-1}\begin{array}{rl}
|T(t,s)| &\le K_1e^{a_1(t-s)},\quad  |T(s,t)| \le K_2e^{a_2(t-s)},\\ \\
|T(t,s)P(s)| & \le D_1e^{-\lambda_1(t-s)},\quad |T(s,t)(I-P(t))|  \le D_2e^{-\lambda_2(t-s)}.
\end{array} \end{equation}
\item  there exists $M\ge 1$ such that~\eqref{a1} holds;
\item there exists $\varepsilon >0$ such that 
\begin{equation}\label{EPs}|f(t,x_1,y)-f(t,x_2,y)|\le \varepsilon|x_1-x_2|, \quad \text{for $t\in \R$, $x_1, x_2\in X$ and $y\in Y$.}\end{equation}
\end{enumerate}
Then, if
\begin{equation}\label{epcon} 
\left(\frac{D_1}{\lambda_1}+\frac{D_2}{\lambda_2}\right)\varepsilon<1,\end{equation}
there exist functions $H$ and $\bar H$  as in the statement of Theorem~\ref{t1}.
Furthermore,  suppose that
\[ 0<\alpha<\min\{\lambda_1/a_2, \lambda_2/a_1\}.\]
(a) Then, given $C>0$, provided
$\varepsilon$ is sufficiently  small so  that
\begin{equation}\label{epcon1} \max\{2M,\varepsilon\}
(1+C)\varepsilon^{\alpha}\displaystyle\left[\frac{D_1K^{\alpha}_1}{\lambda_1-\alpha a_2}
+\frac{D_2K^{\alpha}_2}{\lambda_2-\alpha a_1}\right]
\le C,\end{equation}
we have that 
\begin{equation}\label{conc1} |h(t,x_1,y)-h(t,x_2,y)|\le  C|x_1-x_2|^{\alpha}, \quad \text{for $t\in \R$, $x_1, x_2\in X$ and $y\in Y$.}\end{equation}
(b) Moreover, if  $\varepsilon$ is sufficiently  small so that
\[ 0<\alpha<\min\{\lambda_1/(a_2+K_1\varepsilon), \lambda_2/(a_1+K_2\varepsilon)\},\]
and 
\begin{equation}\label{epcon4} 2M\varepsilon^{\alpha}
\left[\frac{D_1K^{\alpha}_1}{\lambda_1-\alpha(a_2+K_1\varepsilon)}
+\frac{D_2K^{\alpha}_2}{\lambda_2-\alpha(a_1+K_2\varepsilon)}\right] \le C,\end{equation}
then
\begin{equation}\label{conc2} |\bar h(t,x_1,y)-\bar h(t,x_2,y)|\le  C|x_1-x_2|^{\alpha}, \quad \text{for $t\in \R$, $x_1, x_2\in X$ and $y\in Y$.}\end{equation}
\end{corollary}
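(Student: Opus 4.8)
The plan is to obtain Corollary~\ref{KOR} as a specialization of Theorems~\ref{t1} and~\ref{t2}: their hypotheses are verified directly from the dichotomy and from the bounded growth/decay estimates in~\eqref{est-1}, and the only real work is to evaluate the Green's-function integrals in~\eqref{bound}, \eqref{r}, \eqref{c1} and~\eqref{c2}. Note first that $f$ here satisfies~\eqref{feq}, \eqref{a1} and~\eqref{a2} with $\mu(t)\equiv M$ and $\gamma(t)=\varepsilon(t)\equiv\varepsilon$, and that~\eqref{ap} holds with $N:=\max\{1,\varepsilon\}$; since $M\ge1$ we have $\max\{2M,N\}=\max\{2M,\varepsilon\}$. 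Using $|T(t,s)P(s)|\le D_1e^{-\lambda_1(t-s)}$ and $|T(s,t)(I-P(t))|\le D_2e^{-\lambda_2(t-s)}$ for $t\ge s$ and splitting the integral at $s=t$,
\[
\sup_{t\in\R}\int_{-\infty}^{\infty}|\mathcal G(t,s)|\,ds\le\sup_{t\in\R}\left(\int_{-\infty}^{t}D_1e^{-\lambda_1(t-s)}\,ds+\int_{t}^{\infty}D_2e^{-\lambda_2(s-t)}\,ds\right)=\frac{D_1}{\lambda_1}+\frac{D_2}{\lambda_2}.
\]
Hence the first condition in~\eqref{r} holds, and by~\eqref{epcon} so does the second, with $q=\varepsilon(D_1/\lambda_1+D_2/\lambda_2)<1$; in particular~\eqref{bound} holds, so Theorem~\ref{t1} applies and yields $H$ and $\bar H$.

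For part~(a) I would invoke Theorem~\ref{t2} with $\varepsilon(t)\equiv\varepsilon$ and with $\Delta_1(t,s)$ taken to be the piecewise-exponential majorant of $|T(t,s)|$ furnished by~\eqref{est-1}, so that~\eqref{T} holds (the hypotheses~\eqref{a1}, \eqref{a2}, \eqref{ap}, \eqref{r} are already in place). Splitting $\int|\mathcal G(t,s)|\varepsilon^\alpha\Delta_1^\alpha(s,t)\,ds$ at $s=t$ and substituting the decay bound for $\mathcal G$ and the growth bound for $\Delta_1(s,t)$ gives, on $s<t$, an integrand dominated by $e^{-(\lambda_1-\alpha a_2)(t-s)}$ and, on $s>t$, one dominated by $e^{-(\lambda_2-\alpha a_1)(s-t)}$; these are integrable exactly when $\alpha<\min\{\lambda_1/a_2,\lambda_2/a_1\}$, and then
\[
\sup_{t\in\R}\int_{-\infty}^{\infty}|\mathcal G(t,s)|\varepsilon^\alpha(s)\Delta_1^\alpha(s,t)\,ds\le\varepsilon^\alpha\left[\frac{D_1K_1^\alpha}{\lambda_1-\alpha a_2}+\frac{D_2K_2^\alpha}{\lambda_2-\alpha a_1}\right].
\]
Thus~\eqref{epcon1} implies~\eqref{c1}, and the first assertion of Theorem~\ref{t2} gives~\eqref{conc1}.

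For part~(b) the one genuinely new step is producing the bound $\Delta_2$ of~\eqref{oo}. If $t\mapsto(x(t),y(t))$ and $t\mapsto(z(t),y(t))$ both solve~\eqref{Heq2} with the same $y$, then $w=x-z$ satisfies $\dot w=A(t)w+[f(t,x,y)-f(t,z,y)]$, and by the variation-of-constants formula together with~\eqref{est-1}, \eqref{EPs} and Gronwall's inequality (applied separately in forward and backward time) one gets $|x(t)-z(t)|\le\Delta_2(t,s)|x(s)-z(s)|$ with $\Delta_2(t,s)$ of the form $K\,e^{(a+K\varepsilon)|t-s|}$, the forward/backward constants and rates being those of~\eqref{est-1}; the extra $K\varepsilon$ in the exponent is the contribution of the nonlinearity. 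Inserting this into $\int|\mathcal G(t,s)|\varepsilon^\alpha\Delta_2^\alpha(s,t)\,ds$ and splitting at $s=t$ as before, the integral converges precisely when $\alpha<\min\{\lambda_1/(a_2+K_1\varepsilon),\lambda_2/(a_1+K_2\varepsilon)\}$ and is then at most $\varepsilon^\alpha\left[\frac{D_1K_1^\alpha}{\lambda_1-\alpha(a_2+K_1\varepsilon)}+\frac{D_2K_2^\alpha}{\lambda_2-\alpha(a_1+K_2\varepsilon)}\right]$. Hence~\eqref{epcon4} implies~\eqref{c2}, and the second assertion of Theorem~\ref{t2} yields~\eqref{conc2}.

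The main obstacle is the Gronwall estimate for $\Delta_2$: it must be run separately forward and backward, and because the perturbation shifts each exponent by $K_i\varepsilon$, it is this shift that forces both the smallness of $\varepsilon$ and the narrower admissible range of $\alpha$ in part~(b). Everything else is bookkeeping—tracking which rate $\lambda_i$, $a_i$ and which constant $D_i$, $K_i$ governs each of the two half-lines $s<t$ and $s>t$ in the integral splittings—after which the corollary follows by direct substitution into Theorems~\ref{t1} and~\ref{t2}.
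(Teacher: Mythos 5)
Your proposal is correct and follows the same route as the paper: bound $|\mathcal G(t,s)|$ by the dichotomy constants to get $D_1/\lambda_1+D_2/\lambda_2$ (verifying~\eqref{bound} and~\eqref{r}), take $\Delta_1$ from the growth/decay bounds in~\eqref{est-1} to verify~\eqref{c1} for part~(a), and derive $\Delta_2$ via variation of constants plus Gronwall (exactly what the paper relegates to Appendix~\ref{A2}) to verify~\eqref{c2} for part~(b), in each case by splitting the integral at $s=t$ and computing the resulting exponential integrals. The only omitted detail is writing out the explicit piecewise-exponential form of $\Delta_1$ and $\Delta_2$ and matching the $K_i$, $a_i$ indices to the two half-lines, which is routine bookkeeping and is handled the same way in the paper's own proof.
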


\begin{proof}
It follows from (\ref{G}) and (\ref{est-1}) that 
\[ |{\mathcal G}(t,s)|\le \begin{cases}D_1e^{-\lambda_1(t-s)}& (t\ge s)\\D_2e^{-\lambda_2(s-t)}& (t<s).\end{cases}\] 
Then 
\[
\sup_{t\in \R}\int^{\infty}_{-\infty}|\mathcal G(t,s)|\, ds \le \left(\frac{D_1}{\lambda_1}+\frac{D_2}{\lambda_2}\right)<\infty
\]
and (\ref{epcon}) implies that
\[
\varepsilon \sup_{t\in \R} \int^{\infty}_{-\infty}|\mathcal G(t,s)|\, ds<1.
\]
We can now apply Theorem~\ref{t1} to conclude that there exist functions $H$ and $\bar H$ as in the statement of that result.

Moreover, observe that~\eqref{est-1} implies that~\eqref{T} holds with 
\[ \Delta_1(t,s)=\begin{cases} 
K_1e^{a_1|t-s|} & \text{for  $t\ge s$;} \\
	K_2e^{a_2|t-s|} & \text{for $t< s$}.
\end{cases}
\]
Then,
\[ \begin{split}
&\int^{\infty}_{-\infty}|\mathcal G(t,s)|\varepsilon^{\alpha}\Delta^{\alpha}_1(s,t)\, ds
\\ 
&\le \displaystyle\int^{t}_{-\infty}D_1e^{-\lambda_1(t-s)}\varepsilon^{\alpha}K^{\alpha}_1
e^{\alpha a_2(t-s)}\, ds
+\int^{\infty}_{t}D_2e^{-\lambda_2(s-t)}\varepsilon^{\alpha}K^{\alpha}_2e^{\alpha a_1(s-t)}\, ds\\ 
&\le \varepsilon^{\alpha}\displaystyle\left[\frac{D_1K^{\alpha}_1}{\lambda_1-\alpha a_2}
+\frac{D_2K^{\alpha}_2}{\lambda_2-\alpha a_1}\right],
\end{split}\]
for  each $t\in \R$. Now Theorem~\ref{t2} and~\eqref{epcon1} imply that~\eqref{conc1} holds.

Next it is proved in Appendix~\ref{A2} that~\eqref{oo} holds with 
\[ \Delta_2(t,s)=\begin{cases}
K_1e^{(a_1+K_1\varepsilon)(t-s)} & \text{for $t\ge s$;} \\
  K_2e^{(a_2+K_2\varepsilon)(s-t)}& \text{for $s\ge t$.}
\end{cases}\]
Then,
\[\begin{split}& \int^{\infty}_{-\infty}|\mathcal G(t,s)|\varepsilon^{\alpha}\Delta^{\alpha}_2(s,t)\, ds
\\ 
&\le \displaystyle\int^{t}_{-\infty}D_1e^{-\lambda_1(t-s)}\varepsilon^{\alpha}
K^{\alpha}_1e^{\alpha(a_2+K_2\varepsilon)(t-s)}\, ds
+\int^{\infty}_{t}D_2e^{-\lambda_2(s-t)}\varepsilon^{\alpha}K^{\alpha}_2
e^{\alpha(a_1+K_1\varepsilon)(s-t)}\, ds,\end{split}\]
which yields that 
\[ \int^{\infty}_{-\infty}|\mathcal G(t,s)|\varepsilon^{\alpha}\Delta^{\alpha}_2(s,t)\, ds
\le \varepsilon^{\alpha}\left[\frac{D_1K^{\alpha}_1}{\lambda_1-\alpha(a_2+K_2\varepsilon)}
+\frac{D_2K^{\alpha}_2}{\lambda_2-\alpha(a_1+K_1\varepsilon)}\right],\]
for $t\in \R$. We conclude from Theorem~\ref{t2} and~\eqref{epcon4} that~\eqref{conc2} holds. The proof of the corollary is completed. 
\end{proof}

In the next corollary we give conditions ensuring
the existence of a linearization and H\" older continuity in $y$.

\begin{corollary}
Assume that conditions (1) and (2) as in Corollary \ref{KOR} and that, in addition, the following conditions hold:
\begin{enumerate}
\item[(3)] there exists $\varepsilon>0$ such that 
\begin{equation}\label{3:47}
|f(t,x_1,y_1)-f(t,x_2,y_2)|\le \varepsilon[|x_1-x_2|+|y_1-y_2|],
\end{equation}
for $t\in \R$, $x_1, x_2\in X$ and $y_1, y_2\in Y$;
\item[(4)] there exists $M_2>0$ such that 
\begin{equation}|g(t,y)-g(t,w)|\le M_2|y-w|, \quad \text{for $t\in \R$ and $y, w\in Y$.}\end{equation}
\end{enumerate}
Then, if (\ref{epcon}) holds, there exist functions $H$ and $\bar H$  as in the 
statement of Theorem~\ref{t1}.
Moreover,  we have the following:
\begin{itemize}
\item[(a)]  suppose that 
\[ 0<\alpha<\min\{\lambda_1/M_2, \lambda_2/M_2\}. \]
Given $C>0$, provided $\varepsilon$ is sufficiently small so that 
\begin{equation}\label{cor2cond1} 2M(1+C)\varepsilon^{\alpha}\left[\frac{D_1}{\lambda_1-\alpha M_2}+\frac{D_2}{\lambda_2-\alpha M_2}\right]
\le C,\end{equation}
we have that 
\begin{equation}\label{Con}
 |h(t,x,y_1)-h(t,x,y_2)|\le  C|y_1-y_2|^{\alpha}, 
\end{equation}
for $t\in \R$, $x\in X$ and $y_1, y_2\in Y$.
\item[(b)]  suppose
\[ 0<\alpha<\min\{\lambda_1/M_3,\lambda_2/M_3\},\]
where $M_3=\max\{M_2,a_1,a_2\}$. Then given $C>0$, provided
$\varepsilon >0$ is sufficiently  small so that
\[ 0<\alpha<\min\{\lambda_1/(M_3+K_2\varepsilon),\lambda_2/M_3+K_1\varepsilon)\},\]
and 
\begin{equation}\label{cor2condition3} 2^{1+\alpha} M\varepsilon^{\alpha}
\left[\frac{D_1}{\lambda_1-\alpha(M_3+K_2\varepsilon)}
+\frac{D_2}{\lambda_2-\alpha (M_3+K_1\varepsilon)}\right]\le C,\end{equation}
we have that 
\begin{equation}\label{Con2} |\bar h(t,x,y_1)-\bar h(t,x,y_2)|\le  C|y_1-y_2|^{\alpha},\end{equation}
for $t\in \R$, $x\in X$ and $y_1,y_2\in Y$.
\end{itemize}
\end{corollary}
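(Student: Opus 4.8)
\emph{Existence of $H$ and $\bar H$.} The plan is to follow the proof of Corollary~\ref{KOR} line for line, with Theorem~\ref{t3} in place of Theorem~\ref{t2}. Exactly as in its first paragraph, condition (1) gives $|\mathcal G(t,s)|\le D_1e^{-\lambda_1(t-s)}$ for $t\ge s$ and $|\mathcal G(t,s)|\le D_2e^{-\lambda_2(s-t)}$ for $t<s$, so $\sup_t\int_{-\infty}^{\infty}|\mathcal G(t,s)|\,ds\le D_1/\lambda_1+D_2/\lambda_2<\infty$, while~\eqref{epcon} gives $\varepsilon\sup_t\int_{-\infty}^{\infty}|\mathcal G(t,s)|\,ds<1$. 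Hence~\eqref{r} holds with $\varepsilon(t)\equiv\varepsilon$ (and $N=\max\{1,\varepsilon\}$), so~\eqref{bound} holds and Theorem~\ref{t1} produces $H$ and $\bar H$ of the stated form. Since~\eqref{a1}, \eqref{3:47} and~\eqref{ap} are precisely the standing hypotheses of Theorem~\ref{t3}, all that remains is to verify the structural bounds~\eqref{sig} and~\eqref{Del} and to evaluate the integrals occurring in~\eqref{c3} and~\eqref{c44}.

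\emph{Part (a).} Condition (4) makes $g$ globally Lipschitz in $y$ with constant $M_2$, so Gronwall's inequality gives $|y(t)-w(t)|\le e^{M_2|t-s|}|y(s)-w(s)|$ for any two solutions of $\dot y=g(t,y)$; thus~\eqref{sig} holds with $\sigma(t,s)=e^{M_2|t-s|}$. I would then split the integral in~\eqref{c3} at $s=t$, insert the dichotomy bound for $|\mathcal G(t,s)|$ together with $\sigma^\alpha(s,t)=e^{\alpha M_2|s-t|}$, and note that the two elementary integrals that arise converge precisely when $\alpha<\lambda_1/M_2$ and $\alpha<\lambda_2/M_2$ and sum to $\varepsilon^\alpha\bigl(D_1/(\lambda_1-\alpha M_2)+D_2/(\lambda_2-\alpha M_2)\bigr)$. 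As $M\ge1$ and $\varepsilon$ is small one may take $N=1$, so $\max\{2M,N\}=2M$ and~\eqref{cor2cond1} is exactly the hypothesis~\eqref{c3}; the first assertion of Theorem~\ref{t3} then yields~\eqref{Con}.

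\emph{Part (b).} The missing ingredient is~\eqref{Del}. Given solutions $(x(t),y(t))$ and $(z(t),w(t))$ of~\eqref{Heq2} with $x(s)=z(s)$, part (a) already gives $|y(t)-w(t)|\le e^{M_2|t-s|}|y(s)-w(s)|$. Subtracting the variation-of-constants formulas for $x$ and $z$, using $x(s)=z(s)$, the growth bounds $|T(t,r)|\le K_1e^{a_1(t-r)}$ for $t\ge r$ and $|T(t,r)|\le K_2e^{a_2(r-t)}$ for $r\ge t$, and~\eqref{3:47}, one obtains a Gronwall inequality for $|x(t)-z(t)|$ in which $|y(r)-w(r)|$ enters as a known inhomogeneous term; bounding $a_1,a_2,M_2$ by $M_3=\max\{M_2,a_1,a_2\}$ and rescaling appropriately before applying the (forward, resp. backward) Gronwall lemma — the analogue of the estimate of Appendix~\ref{A2} for $\Delta_2$ — gives $|x(t)-z(t)|\le e^{(M_3+K_1\varepsilon)(t-s)}|y(s)-w(s)|$ for $t\ge s$ and $|x(t)-z(t)|\le e^{(M_3+K_2\varepsilon)(s-t)}|y(s)-w(s)|$ for $s\ge t$. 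Adding the bound on $|y(t)-w(t)|$ shows~\eqref{Del} holds with
\[
\Delta_3(t,s)=\begin{cases} 2e^{(M_3+K_1\varepsilon)(t-s)} & t\ge s,\\ 2e^{(M_3+K_2\varepsilon)(s-t)} & s\ge t.\end{cases}
\]
Substituting into~\eqref{c44} and splitting at $s=t$, the resulting integrals converge precisely when $0<\alpha<\min\{\lambda_1/(M_3+K_2\varepsilon),\lambda_2/(M_3+K_1\varepsilon)\}$ and sum to $2^\alpha\varepsilon^\alpha\bigl(D_1/(\lambda_1-\alpha(M_3+K_2\varepsilon))+D_2/(\lambda_2-\alpha(M_3+K_1\varepsilon))\bigr)$; multiplied by the factor $2M$ in~\eqref{c44} this equals the left-hand side of~\eqref{cor2condition3}, so the second assertion of Theorem~\ref{t3} yields~\eqref{Con2}.

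\emph{Main obstacle.} The only non-routine step is~\eqref{Del}: since the $x$-equation is coupled to $y$ through $f$, the $x$- and $y$-differences cannot be estimated independently, and the clean rate $M_3+K_i\varepsilon$ only survives if one carries $|y(r)-w(r)|$ as an inhomogeneity and rescales before invoking Gronwall, lest a spurious factor $|t-s|$ appear. The forward and backward cases must be handled separately, and it is the argument-swap $\Delta_3(s,t)$ inside~\eqref{c44} that produces the crossed pairing of $K_1,K_2$ with $\lambda_2,\lambda_1$ in~\eqref{cor2condition3}.
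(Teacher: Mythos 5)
Your proposal is correct and follows essentially the same approach as the paper: existence via the dichotomy estimate on $\mathcal G$ as in Corollary~\ref{KOR}; for part (a), Gronwall gives $\sigma(t,s)=e^{M_2|t-s|}$ and the integral in~\eqref{c3} is evaluated directly; for part (b), the coupled Gronwall argument (carried out in the paper's Appendix~\ref{A3}, not~\ref{A2}, though the two are closely analogous) yields $\Delta_3(t,s)=2e^{(M_3+K_i\varepsilon)|t-s|}$, with the integration by parts precisely the step that absorbs the $|t-s|$ factor you flag.
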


\begin{proof}
The existence of $H$ and $\bar H$ follows as in the proof of Corollary~\ref{KOR}. Let us now establish~(a). By a simple Gronwall lemma argument, we have that~\eqref{sig} holds with
\begin{equation}\label{M2} \sigma(t,s)=e^{M_2|t-s|}.\end{equation}
 Then, 
\[\begin{split}
&\int^{\infty}_{-\infty}|\mathcal G(t,s)|\sigma^{\alpha}(s,t)\, ds\\ \\
&\le  \int^{t}_{-\infty}D_1e^{-\lambda_1(t-s)}e^{\alpha M_2(t-s)}\, ds
+\int^{\infty}_{t}D_2e^{-\lambda_2(s-t)}e^{\alpha M_2(s-t)}\, ds\\ \\
&\le   \left[\frac{D_1}{\lambda_1-\alpha M_2}+\frac{D_2}{\lambda_2-\alpha M_2}\right],
\end{split}\]
for $t\in \R$. Let $C>0$ be given. Then, if $\varepsilon$ is such that~\eqref{cor2cond1} is satisfied, it follows from Theorem~\ref{t3} that~\eqref{Con} holds.

We now prove~(b).   Let $t\mapsto (x(t),y(t))$ and
$t\mapsto (z(t),w(t))$ be solutions of (\ref{Heq2}) such that $x(s)=z(s)$. 
Then in Appendix~\ref{A3} it is proved that
\[ |x(t)-z(t)|+|y(t)-w(t)|\le 2e^{(M_3+K_1\varepsilon)|t-s|}|y(s)-w(s)|\]
if $t\ge s$ and
\[ |x(t)-z(t)|+|y(t)-w(t)|\le 2e^{(M_3+K_2\varepsilon)|t-s|}|y(s)-w(s)|\]
if $t\le s$. Hence, \eqref{Del} holds with
\[ \Delta_3(t,s)=\begin{cases} 2e^{(M_3+K_1\varepsilon)|t-s|} & (t\ge s)\\
                             2e^{(M_3+K_2\varepsilon)|t-s|} & (t\le s).\end{cases}\]
Then, 
\[\begin{split}
&\int^{\infty}_{-\infty}|\mathcal G(t,s)|\Delta^{\alpha}_3(s,t)\, ds\\ 
&\le 2^\alpha \displaystyle \int^{t}_{-\infty}
D_1e^{-\lambda_1(t-s)}e^{\alpha(M_3+K_2\varepsilon)(t-s)}\, ds
+2^\alpha \int^{\infty}_{t}D_2e^{-\lambda_2(s-t)}e^{\alpha(M_3+K_1\varepsilon)(s-t)}\, ds\\ 
&\le \displaystyle  2^\alpha\left[\frac{D_1}{\lambda_1-\alpha(M_3+K_2\varepsilon)}+
\frac{D_2}{\lambda_2-\alpha (M_3+K_1\varepsilon)}\right].
\end{split}\]
Given $C>0$, if $\varepsilon$ is satisfies~\eqref{cor2condition3}, it follows from Theorem~\ref{t3} that~\eqref{Con2} holds. The proof of the corollary is completed.
\end{proof}

\begin{remark}\label{Obs}
Under the assumptions of the previous two corollaries we have (see Remark~\ref{rm3}) 
that $H$ and $\bar H$ are H\" older continuous in $(x,y)$ on each bounded subset of 
$X\times Y$. A similar result is established in~\cite[Theorem 2]{SX} 
(see also~\cite[Theorem 2.2.]{XWKR}). In contrast to these results, we 
obtain H\" older continuity on each bounded subset of $X\times Y$, and not only 
on the unit ball on $X\times Y$. Finally, our proofs of 
Theorems~\ref{t2} and~\ref{t3} are somewhat simpler than the corresponding arguments 
in~\cite{SX, XWKR}.
\end{remark}

\begin{remark}
In addition, we also provide an upper bound for the 
H\" older exponent in $\alpha$ in terms of the ratios between the dichotomy
exponents and the growth and decay rates for $\dot x=A(t)x$, in the case of H\" older
continuity in $x$. In the case of H\" older continuity in $y$, the growth and decay rates 
of solutions of $\dot y=g(t,y)$ must also be taken into account.
\medskip

For $\dot x=A(t)x$, the dichotomy exponents and the growth and decay rates can
all be seen from the Sacker-Sell spectrum $\Sigma$, which is the set of real $\lambda$ for
which $\dot x=(A(t)-\lambda I)x$ does not have an exponential dichotomy. 
$\Sigma$ is a closed set (see \cite{S}). We suppose in addition it is bounded. Since we are assuming
$\dot x=A(t)x$ has an exponential dichotomy, then $0\notin \Sigma$. In our corollaries
above, we have the four positive numbers $\lambda_1\le a_2$, $\lambda_2\le a_1$.
It turns out we may take them as any numbers satisfying
\[ -a_2< \inf\Sigma \le \sup[\Sigma\cap(-\infty,0)]<-\lambda_1<0<\lambda_2
   < \inf[\Sigma\cap(0,\infty)]\le \sup\Sigma<a_1.\]

\end{remark}

\subsection{Examples without exponential dichotomy}
We now discuss the applicability of Theorems~\ref{t2} and~\ref{t3} in the case when $\dot x=A(t)x$ does not admit an exponential dichotomy.

\begin{example}
We recall the notion of an exponential trichotomy introduced by Elaydi and Hajek~\cite{EH}. Assume that $X=\R^d$. We say that $\dot x=A(t)x$ admits an exponential trichotomy if there exist two families of projections 
$(P^+(t))_{t\ge 0}$ and $(P^-(t))_{t\le 0}$ and positive constants $D_i, \lambda_i$, $i\in \{1, 2, 3, 4\}$ such that the following conditions hold:
\begin{itemize}
\item $P^+(t)T(t,s)=T(t,s)P^+(s)$ for $t, s\ge 0$ and $P^-(t)T(t,s)=T(t,s)P^-(s)$ for $t, s\le 0$;
\item for $t\ge s\ge 0$, 
\[
|T(t,s)P^+(s)|\le D_1e^{-\lambda_1(t-s)} \quad \text{and} \quad |T(s,t)(I-P^+(t))|\le D_2e^{-\lambda_2(t-s)};
\]
\item for $0\ge t\ge s$, 
\[
|T(t,s)P^-(s)|\le D_3e^{-\lambda_3(t-s)} \quad \text{and} \quad |T(s,t)(I-P^-(t))|\le D_4e^{-\lambda_4(t-s)};
\]
\item $P^-(0)=P^-(0)P^+(0)=P^+(0)P^-(0)$.
\end{itemize}
Obviously, if $\dot x=A(t)x$ admits an exponential dichotomy, then it also admits an exponential trichotomy. However, the converse statement does not  hold. 

Assume that $\dot x=A(t)x$ admits an exponential trichotomy and that the first two estimates in~\eqref{est-1} hold (with some $K_i, a_i>0$, $i=1,2$). Furthermore, let $\mathcal G(t,s)$ be as in~\eqref{G} with 
\[
P(s)=\begin{cases}
P^+(s) & s\ge 0; \\
P^-(s) & s<0.
\end{cases}
\]
It is easy to show that the first estimate in~\eqref{r} holds. 
Similarly to the arguments in the previous subsection, one can formulate conditions under which Theorem~\ref{t2} and~\ref{t3} can be applied in the present setting.

Finally, we note the following: if $X$ is finite-dimensional and the first two estimates in~\eqref{est-1} hold (with some $K_i, a_i>0$, $i=1,2$), then $\dot x=A(t)x$ admits an exponential trichotomy if and only if the first estimate in~\eqref{r} holds. Indeed, this follows from~\cite[Theorem 5.2]{EH}.
\end{example}

 We now consider an example studied (for a different purpose) by Coppel~\cite[p.27]{C}. 
\begin{example}
Let $X=\R$. We choose a continuously differentiable function $\phi \colon \R \to \R$ with the following properties:
\begin{itemize}
\item $\phi(t) \in (0, 1]$ for each $t\in \R$ and $\phi(t)=1$ for $t\le 0$;
\item $\int_0^\infty (1/\phi(s)-1) \, ds<+\infty$;
\item \begin{equation}\label{lim} \lim_{n\to \infty}\frac{\phi(n)}{\phi(n-2^{-n})}=+\infty. \end{equation}
\end{itemize}
Furthermore, for $t\in \R$ set 
\[
A(t)=\frac{\phi'(t)}{\phi(t)}-1 \quad \text{and} \quad P(t)=I.
\]
Then, 
\[
T(t,s)= \frac{\phi(t)}{\phi(s)} e^{-(t-s)} \quad \text{and} \quad 
\mathcal G(t,s)=\begin{cases}
\frac{\phi(t)}{\phi(s)} e^{-(t-s)} & t\ge s; \\
0 & t<s.
\end{cases}
\]
It follows easily from~\eqref{lim} that $\dot x=A(t)x$ does not admit an exponential dichotomy (since it is not uniformly stable).

We now assume that there exist $M\ge 1$ and $\varepsilon >0$ such that~\eqref{a1} and~\eqref{EPs} hold. Observe that for every $t\ge 0$, 
\begin{equation}\label{leb}
\begin{split}
\int_0^t \frac{\phi(t)}{\phi(s)} e^{-(t-s)} \, ds &=e^{-t}\phi(t) \int_0^t \frac{e^s}{\phi(s)}\, ds  \\
&=e^{-t}\phi(t) \int_0^t e^s\, ds+e^{-t} \phi(t) \int_0^t e^s (1/\phi(s)-1)\, ds  \\
&\le 1+\int_0^\infty (1/\phi(s)-1)\, ds,
\end{split}
\end{equation}
and thus 
\[
\sup_{t\in \R} \int_{-\infty}^\infty |\mathcal G(t,s) | \, ds \le 2+\int_0^\infty (1/\phi(s)-1)\, ds <+\infty.
\]
Assuming that $\varepsilon$ satisfies 
\[
\varepsilon \bigg (2+\int_0^\infty (1/\phi(s)-1)\, ds \bigg )<1, 
\]
we have that~\eqref{bound} holds with $\mu(t)=1$ and $\gamma(t)=\varepsilon$.  Consequently, there exist $H$ and $\bar H$ as in the statement of Theorem~\ref{t1}. 

Take now $\alpha \in (0, 1)$ and $C>0$. By arguing as in~\eqref{leb}, one can conclude that 
\[
\sup_{t\in \R} \int_{-\infty}^\infty \mathcal G(t,s)\Delta_1^\alpha (s, t)\, ds=\sup_{t\in \R}\int_{-\infty}^\infty \mathcal G(t,s)T(s, t)^\alpha \, ds \le \frac{2}{1-\alpha} +\int_0^\infty (1/\phi(s)-1)\, ds. 
\]
Therefore, provided that $\varepsilon >0$ satisfies 
\[
\max \{2M, \varepsilon \} (1+C)\varepsilon^\alpha  \bigg ( \frac{2}{1-\alpha} +\int_0^\infty (1/\phi(s)-1)\, ds \bigg ) \le C,
\]
it follows from Theorem~\ref{t2} that~\eqref{conc1} holds. 

On the other hand, by arguing  as in Appendix~\ref{A2}, one can easily show that 
\[
\Delta_2 (t,s)=\frac{\phi(t)}{\phi(s)} e^{-(1+\varepsilon)(t-s)}, \quad t\le s.
\]
Then, provided that $\varepsilon$ is sufficiently small, we have that 
\[
\sup_{t\in \R} \int_{-\infty}^\infty \mathcal G(t,s)\Delta_2^\alpha (s,t) \, ds \le \frac{2}{1-\alpha (1+\varepsilon)}+\int_0^\infty (1/\phi(s)-1)\, ds.
\]
Hence, if $\varepsilon>0$ satisfies 
\[
2M\varepsilon^\alpha \bigg (\frac{2}{1-\alpha (1+\varepsilon)}+\int_0^\infty (1/\phi(s)-1)\, ds \bigg ) \le C,
\]
it follows from Theorem~\ref{t2} that~\eqref{conc2} holds.  Similarly,  one can discuss the applicability of Theorem~\ref{t3} in this setting. 
\end{example}

\begin{remark}
The previous two examples show that Theorem~\ref{t2} and~\ref{t3} can be applied in situations when $\dot x=A(t)x$ does not admit an exponential dichotomy. This shows that our results are more general than those in~\cite{BV, SX, XWKR}.
\end{remark}

\section{The case of discrete time}\label{DT}
Let $X$ and $Y$ be as in Subsection~\ref{Pre}. Let $(A_n)_{n\in \Z}$ be a sequence of invertible operators in $\mathcal B(X)$ and  $f_n \colon X\times Y \to X$, $n\in \Z$ be a sequence of maps with the property that there exist sequences $(\mu_n)_n$ and $(\gamma_n)_n$ in $[0, \infty)$ such that 
\begin{equation}\label{dis1}
|f_n(x, y)|\le \mu_n \quad \text{and} \quad |f_n(x_1, y)-f_n(x_2, y)|\le \gamma_n |x_1-x_2|,
\end{equation}
for $n\in \Z$, $x, x_1, x_2\in X$ and $y\in Y$. Furthermore, let $(g_n)_{n\in \Z}$ be a sequence of homeomorphisms on $Y$. We consider a coupled system 
\begin{equation}\label{nnd}
x_{n+1} =A_nx_n+f_n(x_n, y_n), \quad y_{n+1}=g_n( y_n).
\end{equation}
For $m, n\in \Z$, set 
\[
\mathcal A(m, n)=\begin{cases}
A_{m-1}\cdots A_n & \text{for $m>n$;}\\
I  &\text{for $m=n$;} \\
A_m^{-1}\cdots A_{n-1}^{-1} & \text{for $m<n$.}
\end{cases}
\]
Let $(P_n)_{n\in \Z}$ be a sequence in $\mathcal B(X)$. We define 
\begin{equation}\label{G-d}
\mathcal G(m, n)=\begin{cases}
\mathcal A(m, n)P_n & \text{for $m\ge n$;}\\
-\mathcal A(m, n)(I-P_n) & \text{for $m<n$.}
\end{cases}
\end{equation}
\subsection{A linearization result}
Besides~\eqref{nnd}, we also consider the uncoupled system
\begin{equation}\label{lnd}
x_{n+1}=A_n x_n, \quad y_{n+1}=g_n(y_n).
\end{equation}
Let  $k\mapsto (x_1(k, n, x), y(k, n, y))$ denote the solution of~\eqref{lnd} which equals $(x, y)$ when $k=n$. Similarly, $k\mapsto (x_2(k, n, x, y), y(k, n, y))$ is the solution of~\eqref{nnd}
whose value is $(x, y)$ at $k=n$.

The following result is a version of Theorem~\ref{t1} in the present setting.
\begin{theorem}\label{t1d}
Suppose that  
\begin{equation}\label{boundd}
N:=\sup_{m\in \Z} \sum^{\infty}_{n=-\infty}|\mathcal G(m,n)|\mu_{n-1} <\infty  \quad \text{and} \quad q:= \sup_{m\in \Z}\sum^{\infty}_{n=-\infty}|\mathcal G(m,n)|\gamma_{n-1} <1.
\end{equation}
Then, there exists a sequence of continuous functions $H_n\colon X\times Y\to X\times Y$, $n\in \Z$ of the form $H_n(x, y)=(x+h_n(x, y), y)$, where  $\sup_{n,x,y}|h_n(x, y)|<\infty$, such that if $n\mapsto (x_n,y_n)$ is a solution of (\ref{lnd}),
then $n\mapsto H_n(x_n, y_n)$ is a solution of (\ref{nnd}). In addition, there exists a sequence of continuous functions $\bar H_n \colon X\times Y\to X\times Y$, $n\in \Z$ of the form $\bar H_n( x, y)=(x+\bar h_n(x, y), y)$, where $\sup_{n,x, y}|\bar h_n(x, y)|<\infty$,  such that if $n\mapsto (x_n, y_n)$ is a solution of (\ref{nnd}),
then $n\mapsto \bar H_n(x_n, y_n)$ is a solution of (\ref{lnd}). Moreover, $H_n$ and $\bar H_n$ are inverses of
each other, that is,
\begin{equation}\label{inv} H_n(\bar H_n(x,y))=(x,y)=\bar H_n(H_n(x,y)),\end{equation}
for $n\in \Z$ and $(x, y)\in X\times Y$.
\end{theorem}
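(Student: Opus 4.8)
The plan is to mirror the proof of Theorem~\ref{t1} line by line, replacing integrals over $\R$ by bilateral series over $\Z$, the evolution family $T(t,s)$ by $\mathcal A(m,n)$, and differentiation in $t$ by the forward difference $m\mapsto m+1$. The invertibility of each $A_n$ and the assumption that each $g_n$ is a homeomorphism are exactly what make $x_1(k,n,x)=\mathcal A(k,n)x$, $x_2(k,n,x,y)$ and $y(k,n,y)$ well defined and continuous for all $k,n\in\Z$, and guarantee the cocycle identities $y(k,n,y(n,m,\eta))=y(k,m,\eta)$, $x_1(k,n,x_1(n,m,\xi))=x_1(k,m,\xi)$ and $x_2(k,n,x_2(n,m,\xi,\eta),y(n,m,\eta))=x_2(k,m,\xi,\eta)$; these play the role that the existence of $T(t,s)$ for $t<s$ played before.

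First I would let $\mathcal Z$ be the Banach space of all sequences $h=(h_n)_{n\in\Z}$ of continuous maps $h_n\colon X\times Y\to X$ with $\lVert h\rVert_\infty:=\sup_{n,x,y}|h_n(x,y)|<\infty$, and define $T\colon\mathcal Z\to\mathcal Z$ by
\[(Th)_m(\xi,\eta)=\sum_{n=-\infty}^{\infty}\mathcal G(m,n)\,f_{n-1}\bigl(x_1(n-1,m,\xi)+h_{n-1}(x_1(n-1,m,\xi),y(n-1,m,\eta)),\,y(n-1,m,\eta)\bigr).\]
The index shift in $f$ (hence the $\mu_{n-1}$, $\gamma_{n-1}$ in \eqref{boundd}) comes from the discrete variation-of-constants formula $x_m=\mathcal A(m,k)x_k+\sum_{j=k}^{m-1}\mathcal A(m,j+1)b_j$ for $x_{n+1}=A_nx_n+b_n$, after setting $n=j+1$. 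The first estimate in \eqref{boundd} gives $\lVert Th\rVert_\infty\le N$; continuity of $(Th)_m$ in $(\xi,\eta)$ follows since each summand is continuous and the series converges uniformly in $(\xi,\eta)$, being dominated termwise by $\sum_n|\mathcal G(m,n)|\mu_{n-1}\le N$; and the Lipschitz bound in \eqref{dis1} together with the second estimate in \eqref{boundd} yields $\lVert Th-T\tilde h\rVert_\infty\le q\lVert h-\tilde h\rVert_\infty$ with $q<1$. So $T$ has a unique fixed point $h\in\mathcal Z$; setting $H_n(x,y)=(x+h_n(x,y),y)$ gives a bounded family of continuous maps, and by the cocycle identities the fixed-point equation along a solution $(x_n,y_n)$ of \eqref{lnd} becomes $h_m(x_m,y_m)=\sum_n\mathcal G(m,n)f_{n-1}(x_{n-1}+h_{n-1}(x_{n-1},y_{n-1}),y_{n-1})$.

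The step I expect to be the most delicate is the discrete counterpart of ``direct differentiation'': to deduce that $u_n:=x_n+h_n(x_n,y_n)$ satisfies $u_{m+1}=A_mu_m+f_m(u_m,y_m)$. Here I would split the series at $n=m$ according to \eqref{G-d}, use the identity $\mathcal A(m+1,n)=A_m\mathcal A(m,n)$ valid for every $n$ (invertibility of $A_m$ is needed for $n\ge m+1$), and subtract $A_mh_m(x_m,y_m)$ from $h_{m+1}(x_{m+1},y_{m+1})$: all ``tail'' terms cancel and the two boundary terms at $n=m+1$ combine to $A_m\mathcal A(m,m+1)\bigl(P_{m+1}+(I-P_{m+1})\bigr)f_m(u_m,y_m)=A_mA_m^{-1}f_m(u_m,y_m)=f_m(u_m,y_m)$, which with $x_{m+1}=A_mx_m$ gives the claim; hence $n\mapsto H_n(x_n,y_n)$ solves \eqref{nnd}. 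For $\bar H$ I would set $\bar h_m(\xi,\eta)=-\sum_n\mathcal G(m,n)f_{n-1}(x_2(n-1,m,\xi,\eta),y(n-1,m,\eta))$ — no fixed point is required — verify $\bar h\in\mathcal Z$ as above, and run the same telescoping (now using the cocycle identity for $x_2$) to get that $n\mapsto\bar H_n(x_n,y_n)$ solves \eqref{lnd} whenever $(x_n,y_n)$ solves \eqref{nnd}.

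Finally, for the inverse relations in \eqref{inv} I would repeat the end of the proof of Theorem~\ref{t1} verbatim. To show $H_m(\bar H_m(x,y))=(x,y)$, take the solution $(x_n,y_n)$ of \eqref{nnd} with $(x_m,y_m)=(x,y)$, set $(z_n,y_n)=\bar H_n(x_n,y_n)$ (a solution of \eqref{lnd}) and $(u_n,y_n)=H_n(z_n,y_n)$ (a solution of \eqref{nnd}); the two resulting expressions for $z_n$ — one in terms of $x_n$, the discrete analogue of \eqref{Heq5}, the other in terms of $u_n$, the analogue of \eqref{Heq6} — are subtracted to give $u_m-x_m=\sum_n\mathcal G(m,n)[f_{n-1}(u_{n-1},y_{n-1})-f_{n-1}(x_{n-1},y_{n-1})]$. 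Since each $f_n$ is bounded by $\mu_n$ we have $\lVert u-x\rVert_\infty\le 2N<\infty$, so the Lipschitz bound and \eqref{boundd} give $\lVert u-x\rVert_\infty\le q\lVert u-x\rVert_\infty$ and hence $u=x$, i.e. $H_m(\bar H_m(x,y))=(x,y)$. The identity $\bar H_m(H_m(x,y))=(x,y)$ follows by the symmetric argument (the discrete analogues of \eqref{Heq7}--\eqref{Heq8}), completing the proof.
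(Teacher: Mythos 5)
Your proposal is correct and follows essentially the same route as the paper: the same Banach space $\mathcal Z$ of bounded continuous sequences, the same shifted operator $(Th)_m(\xi,\eta)=\sum_n\mathcal G(m,n)f_{n-1}(\cdots)$, the same contraction estimate from \eqref{boundd}, the same explicit formula for $\bar h_n$, and the same comparison-of-expressions argument for \eqref{inv}. The paper only sketches the telescoping step and the verification of \eqref{inv} (deferring to Theorem~\ref{t1}); you spell these out, and your boundary-term computation $A_m\mathcal A(m,m+1)\bigl(P_{m+1}+(I-P_{m+1})\bigr)f_m=f_m$ is a correct discrete analogue of the ``direct differentiation'' in the continuous case.
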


\begin{proof}
Since the proof is analogous to that of Theorem~\ref{t1}, we will only provide a sketch of the argument.
Let $\mathcal Z$ be the space of all sequences $\mathbf h=(h_n)_{n\in \Z}$ of continuous maps $h_n\colon X\times Y\to X$, $n\in \Z$ such that
\[
\lVert \mathbf h\rVert_{\infty}:=\sup_{n, x,y}|h_n(x,y)| <\infty. 
\]
Then, $(\mathcal Z, \lVert \cdot \rVert_{\infty})$ is a Banach space.  For $\mathbf h=(h_n)_{n\in \Z}\in \mathcal Z$, we define $\hat{\mathbf h}=(\hat h_n)_{n\in \Z}$ by 
\[
\hat h_n(\xi, \eta)=\sum_{k=-\infty}^\infty \mathcal G(n, k) H(k, n)(\xi, \eta) \quad \text{for $n\in \Z$ and $(\xi, \eta)\in X\times Y$,}
\]
where
\begin{equation}\label{ff}
\begin{split}
&H(k, n)(\xi, \eta) \\
&=f_{k-1} (x_1(k-1, n, \xi)+h_{k-1}(x_1(k-1, n, \xi), y(k-1, n, \eta)), y(k-1, n, \eta)).
\end{split}
\end{equation}
Observe that it follows from~\eqref{dis1} and~\eqref{boundd} that 
\[
|\hat h_n(\xi, \eta) |\le \sum_{k=-\infty}^\infty |\mathcal G(n,k)|\mu_{k-1}\le N,
\]
for $n\in \Z$ and $(\xi, \eta)\in X\times Y$. Thus, $\hat{\mathbf h}\in \mathcal Z$. In addition, \eqref{dis1} and~\eqref{boundd} imply that 
\[
\lVert \hat{\mathbf h}^1-\hat{\mathbf h}^2\rVert_\infty \le q\lVert \mathbf h^1-\mathbf h^2\rVert_\infty, \quad \text{for $\mathbf h^i\in \mathcal Z$, $i=1, 2$.}
\]
We conclude that the map $T\colon \mathcal Z\to \mathcal Z$ given by $T(\mathbf h)=\hat{\mathbf h}$, $\mathbf h\in \mathcal Z$ is a contraction. Consequently, $T$ has a unique fixed point $\mathbf h=(h_n)_{n\in \Z}\in \mathcal Z$.
Then, 
\[
h_n (\xi, \eta)=\sum_{k=-\infty}^\infty \mathcal G(n, k) H(k, n)(\xi, \eta), 
\]
for $n\in \Z$ and $(\xi, \eta)\in X\times Y$, 
where $H(k, n)(\xi, \eta)$ is given by~\eqref{ff}. Then, we have that 
\[
h_n (x_1(n, m, \xi), y(n, m, \eta))=\sum_{k=-\infty}^\infty \mathcal G(n, k) H(k, n)
(x_1(n, m, \xi), y(n, m, \eta)).
\]
Moreover, since 
\begin{equation}\label{ident}x_1(k-1,n,x_1(n,m,\xi))=x_1(k-1,m,\xi),\;
y(k-1,n,y(n,m,\eta))=y(k-1,m,\eta),\end{equation}
we have that 
\[
\begin{split}
&H(k, n)(x_1(n, m, \xi), y(n, m, \eta))\\
&=f_{k-1}(x_1(k-1, m, \xi)+h_{k-1}(x_1(k-1, m, \xi), y(k-1, m, \eta)), y(k-1, m, \eta)).
\end{split}
\]
It follows that if $n\mapsto (x_n, y_n)$ is  a solution of~\eqref{lnd}, then 
\[
h_n(x_n, y_n)=\sum_{k=-\infty}^\infty \mathcal G(n, k) f_{k-1}(x_{k-1}+h_{k-1}
(x_{k-1},y_{k-1}), y_{k-1}), \ n\in \Z
\]
so that
\[\begin{array}{rl}
h_n(x_n, y_n)
&=\displaystyle\sum_{k=-\infty}^n{\mathcal A}(n, k)P_kf_{k-1}(x_{k-1}+h_{k-1}
(x_{k-1},y_{k-1}), y_{k-1})\\ \\
&\qquad-\displaystyle\sum^{k=\infty}_{n+1}{\mathcal A}(n, k)(I-P_k)f_{k-1}(x_{k-1}+h_{k-1}
(x_{k-1},y_{k-1}), y_{k-1}).\end{array}
\]
Consequently, 
\[
\begin{split}
& x_{n+1}+h_{n+1}(x_{n+1}, y_{n+1})-A_n(x_n+h_n (x_n, y_n)) \\
&=h_{n+1}(x_{n+1}, y_{n+1})-A_n h_n (x_n, y_n) \\
&=\displaystyle A_n\left[\sum_{k=-\infty}^{n+1}{\mathcal A}(n, k)P_kf_{k-1}(x_{k-1}+h_{k-1}
(x_{k-1},y_{k-1}), y_{k-1})\right.\\
&\qquad-\left.\displaystyle\sum^{k=\infty}_{n+2}{\mathcal A}(n, k)(I-P_k)f_{k-1}(x_{k-1}+h_{k-1}
(x_{k-1},y_{k-1}), y_{k-1})\right]\\
&\qquad\qquad-A_nh_n(x_n,y_n)\\
&=A_nh_n(x_n,y_n)+P_{n+1}f_{n}(x_{n}+h_{n}(x_{n},y_{n}), y_{n})
+(I-P_{n+1})f_{n}(x_{n}+h_{n}(x_{n},y_{n}), y_{n})\\
&\qquad -A_nh_n(x_n,y_n)\\
&=f_{n}(x_{n}+h_{n}(x_{n},y_{n}), y_{n}),
\end{split}
\]
for $n\in \Z$. Therefore, $n\mapsto (x_n+h_n(x_n, y_n),y_n)$ is a solution of~\eqref{nnd}. For $n \in \Z$ and $(\xi, \eta)\in X\times Y$, we set
\[ H_n(\xi,\eta)=(\xi+h_n(\xi,\eta),\eta).\]
From the preceding discussion, we have that if $n\mapsto (x_n,y_n)$ is a solution of (\ref{lnd}), then
$n\mapsto H_n(x_n,y_n)$ is a solution of (\ref{nnd}).

Now set
\[
\bar h_n(\xi, \eta)=-\sum_{k\in \Z} \mathcal G(n, k)f_{k-1}(x_2(k-1, n, \xi, \eta), y(k-1, n, \eta)),
\]
for $n\in \Z$ and $(\xi, \eta)\in X\times Y$. Using~\eqref{boundd}, 
one can easily see that $(\bar h_n)_{n\in \Z}\in \mathcal Z$.  
Observe, using a similar identity to (\ref{ident}), that
  \[\bar h_n(x_2(n, m, \xi,\eta),y(m,n,\eta))
=-\sum^{\infty}_{k=-\infty}\mathcal G(n,k)f_{k-1}(x_2(k-1,m,\xi,\eta),y(k-1,m,\eta)).\]
Hence, if $n\mapsto (x_n,y_n)$ is a solution of (\ref{nnd}), we have that 
\begin{equation}\bar h_n(x_n, y_n)
= -\sum^{\infty}_{k=-\infty}\mathcal G(n,k)f_{k-1}(x_{k-1}, y_{k-1}), \quad m\in \Z.\end{equation}
By direct calculation,
\[ \bar h_{n+1}(x_{n+1},y_{n+1})=A_n\bar h_n(x_n,y_n)-f_n(x_n,y_n) \quad n\in \Z,\]
and thus $n\mapsto (x_n+\bar h_n(x_n,y_n),y_n)$ is a solution of (\ref{lnd}). For $n \in \Z$ and $(\xi, \eta)\in X\times Y$, set
\[ \bar H_n(\xi,\eta)=(\xi+\bar h_n(\xi,\eta),\eta).\]
By the  preceding discussion,  if $n\mapsto (x_n,y_n)$ is a solution of (\ref{nnd}), then 
$n\mapsto \bar H_n(x_n,y_n)$ is a solution of (\ref{lnd}). Finally, by arguing as in the proof of Theorem~\ref{t1} one can show that~\eqref{inv} holds.
\end{proof}
The following example is a discretization of Example~\ref{exp}. 
\begin{example}
Let $X=\R^3$ and for $n\in \Z$, set 
\[
A_n=\begin{pmatrix}
\cos 1 & -\sin  1  & 0 \\
\sin 1 & \cos 1 & 0 \\
0 & 0 & \frac{1+n^2}{1+(n+1)^2}
\end{pmatrix} \quad \text{and} \quad
P_n=\begin{pmatrix}
0 & 0 & 0 \\
0 & 0 & 0 \\
0 & 0 & 1
\end{pmatrix}.
\]
Then, 
\[
\mathcal G(m, n)=\begin{cases}
\begin{pmatrix} 
0 & 0 & 0 \\
0 & 0 &0 \\
0 &0 &\frac{1+n^2}{1+m^2}
\end{pmatrix}  & \text{for $m\ge n$;} \\
-\begin{pmatrix}
\cos (m-n) & -\sin (m-n) & 0 \\
\sin (m-n) &\cos (m-n) & 0 \\
0 &0 &0
\end{pmatrix} &\text{for $m<n$.}
\end{cases}
\]
Hence, $|\mathcal G(m,n) |\le 1+n^2$ and thus~\eqref{boundd} holds whenever
\[
 \sum^{\infty}_{n=-\infty}(1+n^2)\mu_{n-1} <\infty  \quad \text{and} \quad \sum^{\infty}_{n=-\infty}(1+n^2)\gamma_{n-1} <1.
\]
Finally, we observe that the difference equation $x_{n+1}=A_n x_n$ does not  admit an exponential dichotomy (or even an ordinary  dichotomy).
\end{example}

\subsection{H\" older continuity  in $x$}
We suppose there exist $\Delta_1(m,n)>0$ such that
\begin{equation}\label{Td} |\mathcal A(m, n)|\le \Delta_1(m,n)\end{equation}
and $\Delta_2(m,n)>0$ such that
if $n\mapsto (x_n, y_n)$ and $n\mapsto (z_n,y_n)$ are solutions of (\ref{nnd})
then 
\begin{equation}\label{ood}  |x_m-z_m|\le \Delta_2(m,n)|x_n-z_n|.\end{equation}
Furthermore, we assume there exists $M\ge 1$ and a sequence $(\varepsilon_n)_{n\in \Z}\subset [0, \infty)$  such that 
\begin{equation}\label{a1d} |f_n(x,y)|\le M\end{equation}
and
\begin{equation}\label{a2d} |f_n(x_1,y)-f_n(x_2,y)| \le \varepsilon_n |x_1-x_2|,\end{equation}
for $n\in \Z$, $x, x_1, x_2\in X$ and $y\in Y$. In addition, we assume that there exists $N\ge 1$  such that \begin{equation}\label{apd} \varepsilon_n \le N.\end{equation} Finally, we suppose that
\begin{equation}\label{rd} \sup_{m\in \Z} \sum^{\infty}_{n=-\infty}|\mathcal G(m,n)|<\infty, \quad 
q:=\sup_{m\in \Z}  \sum^{\infty}_{n=-\infty}|\mathcal G(m,n)|\varepsilon_{n-1} <1.\end{equation}
Observe that the above condition implies that~\eqref{boundd} holds and thus Theorem~\ref{t1d} is valid. Hence,  there is a sequence $H_n(x,y)=(x+h_n(x,y),y)$ sending the solutions of
(\ref{lnd}) onto the solutions of (\ref{nnd}) and a sequence  $\bar H_n(x,y)=(x+\bar h_n(x,y),y)$ sending the solutions of
(\ref{nnd}) onto the solutions of (\ref{lnd}).

\begin{theorem}\label{t2d}
 Let $C>0$ and $0<\alpha<1$ be given. Then if 
\begin{equation}\label{c1d} \max\{2M,N\}(1+C)\sup_{n\in \Z}\sum^{\infty}_{m=-\infty}|\mathcal G(n,m)|\varepsilon_{m-1}^{\alpha}\Delta^{\alpha}_1(m-1,n)
\le C,\end{equation}
we have
\[ |h_n(x_1,y)-h_n(x_2,y)|\le C|x_1-x_2|^{\alpha}, \quad \text{for $n\in \Z$, $x_1, x_2\in X$ and $y\in Y$.}\]
Moreover, if
\begin{equation}\label{c2d} 2M \sup_{n\in \Z} \sum^{\infty}_{m=-\infty}|\mathcal G(n,m)|\varepsilon_{m-1}^{\alpha}\Delta^{\alpha}_2(m-1,n)
\le C,\end{equation}
then
\[ |\bar h_n(x_1,y)-\bar h_n(x_2,y)|\le C|x_1-x_2|^{\alpha}, \quad \text{for $n\in \Z$, $x_1, x_2\in X$ and $y\in Y$.}\]
\end{theorem}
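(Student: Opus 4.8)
The plan is to follow the proof of Theorem~\ref{t2} almost verbatim, replacing the integrals over $\R$ by sums over $\Z$, the evolution family $T(t,s)$ by the discrete cocycle $\mathcal A(m,n)$, and keeping track of the index shift $k\mapsto k-1$ that already appears in the Green's-function representation from the proof of Theorem~\ref{t1d}. First I would record the discrete interpolation inequality obtained from \eqref{a1d} and \eqref{a2d} exactly as \eqref{Heq9} was obtained from \eqref{a1} and \eqref{a2}:
\[
|f_n(x,y)-f_n(z,y)|\le 2M\varepsilon_n^{\alpha}|x-z|^{\alpha},\qquad n\in\Z,\ x,z\in X,\ y\in Y.
\]

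Next, with $\mathcal Z$ and the contraction $T\colon\mathcal Z\to\mathcal Z$, $T(\mathbf h)=\hat{\mathbf h}$, as in the proof of Theorem~\ref{t1d}, I would introduce the set $\mathcal Z'$ of all $\psi=(\psi_n)_{n\in\Z}\in\mathcal Z$ such that $|\psi_n(x_1,y)-\psi_n(x_2,y)|\le C|x_1-x_2|^{\alpha}$ for all $n\in\Z$, $x_1,x_2\in X$, $y\in Y$. This is a nonempty (it contains the zero sequence) closed subset of the Banach space $\mathcal Z$, hence complete. The core step is to verify $T(\mathcal Z')\subset\mathcal Z'$. Given $\psi\in\mathcal Z'$ and a fixed $k$, I would bound $|f_{k-1}(x+\psi_{k-1}(x,y),y)-f_{k-1}(z+\psi_{k-1}(z,y),y)|$ by taking the minimum of the Lipschitz estimate (with constant $\varepsilon_{k-1}\le N$ from \eqref{a2d} and \eqref{apd}) and the interpolation estimate $2M\varepsilon_{k-1}^{\alpha}(\cdot)^{\alpha}$, then inserting $|\psi_{k-1}(x,y)-\psi_{k-1}(z,y)|\le C|x-z|^{\alpha}$ and splitting into the cases $|x-z|>1$ and $|x-z|\le 1$, precisely as in \eqref{tat}; this produces the clean bound $M_1\varepsilon_{k-1}^{\alpha}(1+C)|x-z|^{\alpha}$ with $M_1=\max\{2M,N\}$. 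Applying this with $x=x_1(k-1,n,\xi_1)$, $z=x_1(k-1,n,\xi_2)$, using $|x_1(k-1,n,\xi_1)-x_1(k-1,n,\xi_2)|=|\mathcal A(k-1,n)(\xi_1-\xi_2)|\le\Delta_1(k-1,n)|\xi_1-\xi_2|$ from \eqref{Td}, and summing against $\mathcal G(n,k)$, I would obtain
\[
|(T\psi)_n(\xi_1,\eta)-(T\psi)_n(\xi_2,\eta)|\le M_1(1+C)|\xi_1-\xi_2|^{\alpha}\sum_{k\in\Z}|\mathcal G(n,k)|\,\varepsilon_{k-1}^{\alpha}\,\Delta_1^{\alpha}(k-1,n),
\]
which is $\le C|\xi_1-\xi_2|^{\alpha}$ by \eqref{c1d}. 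Hence $T\psi\in\mathcal Z'$, and since $T$ is the contraction of $\mathcal Z$ from the proof of Theorem~\ref{t1d} leaving the closed set $\mathcal Z'$ invariant, its unique fixed point $\mathbf h=(h_n)_{n\in\Z}$ lies in $\mathcal Z'$, which is the first assertion.

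For $\bar h$, I would start from the explicit formula $\bar h_n(\xi,\eta)=-\sum_{k\in\Z}\mathcal G(n,k)f_{k-1}(x_2(k-1,n,\xi,\eta),y(k-1,n,\eta))$ established in the proof of Theorem~\ref{t1d}, form the difference in the $\xi$-variable, and apply the interpolation inequality directly (no min-argument is needed here, since there is no $\psi$-term). Bounding $|x_2(k-1,n,\xi_1,\eta)-x_2(k-1,n,\xi_2,\eta)|\le\Delta_2(k-1,n)|\xi_1-\xi_2|$ via \eqref{ood} (both solutions share the $y$-component $y(\cdot,n,\eta)$) and summing against $\mathcal G(n,k)$ gives
\[
|\bar h_n(\xi_1,\eta)-\bar h_n(\xi_2,\eta)|\le 2M|\xi_1-\xi_2|^{\alpha}\sum_{k\in\Z}|\mathcal G(n,k)|\,\varepsilon_{k-1}^{\alpha}\,\Delta_2^{\alpha}(k-1,n)\le C|\xi_1-\xi_2|^{\alpha}
\]
by \eqref{c2d}, which is the second assertion.

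There is no genuine obstacle here: the argument is a routine discretization of Theorem~\ref{t2}. The only point demanding care is consistency of the index shift $k\mapsto k-1$ throughout the discrete Green's-function representation, so that the summands carry $\varepsilon_{k-1}$, $\Delta_i(k-1,n)$ and $x_j(k-1,n,\cdot)$ in exactly the right slots; this is precisely why the hypotheses \eqref{c1d} and \eqref{c2d} are phrased with $\varepsilon_{m-1}$ and $\Delta_i(m-1,n)$ rather than $\varepsilon_m$, $\Delta_i(m,n)$. One should also note that all the series appearing are absolutely convergent, since $\varepsilon_n^\alpha\le N^\alpha$ is bounded and the weighted sums are dominated by those in \eqref{c1d} and \eqref{c2d}, which are finite by assumption.
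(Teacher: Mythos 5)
Your proposal is correct and follows essentially the same route as the paper's own proof: the discrete interpolation bound analogous to \eqref{Heq9}, the closed set $\mathcal Z'$ preserved by the contraction $T$ from Theorem~\ref{t1d} (verified via the min-of-two-estimates argument as in \eqref{tat}), and a direct application of the interpolation bound to the explicit formula for $\bar h_n$. The remarks on the $k\mapsto k-1$ index shift and absolute convergence are sound and consistent with how \eqref{c1d} and \eqref{c2d} are phrased.
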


\begin{proof}
We follow closely the proof of Theorem~\ref{t2}. We begin by observing that~\eqref{a1d} and~\eqref{a2d} imply that 
\begin{equation}\label{Heq9d} \begin{array}{rl}
|f_n(x,y)-f_n(z,y)|
&=|f_n(x,y)-f_n(z,y)|^{1-\alpha}|f_n(x,y)-f_n(z,y)|^{\alpha}\\ \\
&\le 2M\varepsilon_n^{\alpha}|x-z|^{\alpha},
\end{array}\end{equation}
for $n\in \Z$, $x, z\in X$ and $y\in Y$. 

Let $\mathcal Z$ be as in the proof of Theorem~\ref{t1d}. Furthermore, let $\mathcal Z'$ denote the set of all $\Psi=(\psi_n)_{n\in \Z} \in \mathcal Z$ such that
\[
|\psi_n(x_1, y)-\psi_n(x_2, y)|\le C|x_1-x_2|^\alpha, \quad \text{for $n\in \Z$, $x_1, x_2\in X$ and $y\in Y$.}
\]
Then, $\mathcal Z'$ is a closed subset of $\mathcal Z$. We now prove that 
$T(\mathcal Z')\subset \mathcal Z'$, where $T$ is as in the proof of Theorem~\ref{t1d}.  We recall that 
\[
(T \Psi)_n(\xi, \eta)=\sum_{k=-\infty}^\infty \mathcal G(n,k)H(k, n)(\xi, \eta) \quad \text{for $n\in \Z$ and $(\xi, \eta)\in X\times Y$,}
\]
where
\[
\begin{split}
&H(k, n)(\xi, \eta) \\
&=f_{k-1} (x_1(k-1, n, \xi)+\psi_{k-1}(x_1(k-1, n, \xi), y(k-1, n, \eta)), y(k-1, n, \eta)).
\end{split}
\]
Take an arbitrary $\Psi=(\psi_n)_{n\in \Z}\in \mathcal Z'$. 
We observe  (using~\eqref{a2d}, \eqref{apd} and~\eqref{Heq9d})  that 
\begin{equation}\label{tatd}
|f_n(x_1+\psi_n(x_1,y),y)-f_n(x_2+\psi_n(x_2,y),y) | 
\le M_1\varepsilon_n^{\alpha}(1+C)|x_1-x_2|^{\alpha},\end{equation}
where $M_1=\max\{N,2M\}$, following the argument in (\ref{tat}) with $t$ replaced by $n$. Then, 
\[ (T\Psi)_n(\xi_1,\eta)-(T\Psi)_n(\xi_2,\eta)=\sum^{\infty}_{m=-\infty}\mathcal G(n,m)p(m),\]
where
\[\begin{split} p(m)
&= f_{m-1} (x_1(m-1, n, \xi_1)+\psi_{m-1}(x_1(m-1, n, \xi_1), y(m-1, n, \eta)), y(m-1, n, \eta))\\ 
&\phantom{=} -f_{m-1} (x_1(m-1, n, \xi_2)+\psi_{m-1}(x_1(m-1, n, \xi_2), y(m-1, n, \eta)), y(m-1, n, \eta)).
\end{split}\]
Using~\eqref{tatd}, we see that 
\[ \begin{split}
|p(m)|
&\le M_1\varepsilon_{m-1}^{\alpha}(1+C)|x_1(m-1,n,\xi_2)-x_1(m-1,n,\xi_1)|^{\alpha}\ \\ 
&\le M_1\varepsilon_{m-1}^{\alpha}(1+C)[\Delta_1(m-1,n)|\xi_1-\xi_2|]^{\alpha}\\ 
&= M_1\varepsilon_{m-1}^{\alpha}(1+C)\Delta^{\alpha}_1(m-1,n)|\xi_1-\xi_2|^{\alpha}.
\end{split}\]
Therefore, by~\eqref{c1d} we have that 
\[\begin{split}
& | (T\Psi)_n(\xi_1,\eta)-(T\Psi)_n(\xi_2,\eta)|\\ 
&\le \displaystyle M_1(1+C)|\xi_1-\xi_2|^{\alpha} \sup_{n\in \Z}\sum^{\infty}_{m=-\infty}|\mathcal G(n,m)|\varepsilon_{m-1}^{\alpha}\Delta^{\alpha}_1(m-1, n)
\\ 
&\le C|\xi_1-\xi_2|^{\alpha},
\end{split}\]
for $t\in \R$, $\xi_1, \xi_2\in X$ and $\eta \in Y$. Therefore, $T\Psi \in \mathcal Z'$. Consequently, the unique fixed point $(h_n)_{n\in \Z}$ of $T$ belongs to $\mathcal Z'$, which implies the first assertion of the theorem.

On the other hand, we recall from Theorem~\ref{t1d} that 
\[\bar h_n(\xi,\eta)=-\sum_{m\in \Z} \mathcal G(n, m)f_{m-1}(x_2(m-1, n, \xi, \eta), y(m-1, n, \eta)).
\]
Then
\[ \bar h_n(\xi_1,\eta)-\bar h_n(\xi_2,\eta)
=\sum^{\infty}_{m=-\infty}\mathcal G(n,m)p(m),\]
where
\[ \begin{split}
p(m) &=f_{m-1}(x_2(m-1,n,\xi_2,\eta),y(m-1,n,\eta)) \\
&\phantom{=}-f_{m-1}(x_2(m-1,n,\xi_1,\eta),y(m-1,n,\eta)). \end{split}\]
By~\eqref{Heq9d}, we have that 
\[ \begin{split} |p(m)| &\le 2M\varepsilon_{m-1}^{\alpha}|x_2(m-1,n,\xi_2,\eta)-x_2(m-1,n,\xi_1,\eta)|^{\alpha} \\
&\le 2M\varepsilon_{m-1}^{\alpha}\Delta_2^{\alpha}(m-1,n)|\xi_1-\xi_2|^{\alpha}. \end{split}\]
Consequently, using~\eqref{c2d} we conclude that 
\[\begin{split}
& |\bar h_n(\xi_1,\eta)-\bar h_n(\xi_2,\eta)|\\ 
&\le \displaystyle 2M |\xi_1-\xi_2|^{\alpha}\sum^{\infty}_{m=-\infty}|\mathcal G(n,m)|\varepsilon_{m-1}^{\alpha}\Delta^{\alpha}_2(m-1,n)
\\ 
&\le C|\xi_1-\xi_2|^{\alpha},
\end{split}\]
for $n\in \Z$, $\xi_1, \xi_2\in X$ and $\eta\in Y$, which establishes the second assertion of the theorem.  The proof of the theorem is completed.
\end{proof}

\subsection{H\" older continuity  in $y$}
We continue to assume that~\eqref{a1d} holds with $M\ge 1$.  Moreover, we assume that there exists a sequence $(\varepsilon_n)_{n\in \Z} \subset [0, +\infty)$ satisfying~\eqref{apd} (with some $N\ge 1$) and such that 
\[|f_n(x_1,y_1)-f_n(x_1,y_2)|\le \varepsilon_n [|x_1-x_2|+|y_1-y_2|],\]
for $n\in \Z$, $x_1, x_2\in X$ and $y_1, y_2\in Y$.
 Furthermore, we suppose that if  $n\mapsto y_n$ and $n\mapsto w_n$ are solutions of $y_{n+1}=g_n(y_n)$, then 
\[ |y_m-w_m|\le \sigma(m,n)|y_n-w_n|,\]
for some $\sigma(m,n)>0$. In addition, we assume that there exist $\Delta_3(m,n)>0$  such that if $m\mapsto (x_m, y_m)$
and $m\mapsto (z_m, w_m)$ are solutions of (\ref{nnd}) with $x_n=z_n$, then 
\[ |x_m-z_m|+|y_m-w_m|\le \Delta_3(m,n)|y_n-w_n|.\]
Finally, suppose that~\eqref{rd} holds. Hence, Theorem~\ref{t1} is again applicable. The proof of the following result is similar to the proofs of Theorems~\ref{t3} and~\ref{t2d} and therefore it is omitted. 
\begin{theorem} \label{t3d}
 Let $C>0$ and $0<\alpha<1$ be given. Then, if 
\[
\max\{2M,N\}(1+C)\sup_{n\in \Z}\sum^{\infty}_{m=-\infty}|\mathcal G(n,m)|\varepsilon_{m-1}^{\alpha}\sigma^{\alpha}(m-1,n)
	\le C,
\]
we have that
	\[ |h_n(x,y_1)-h_n(x,y_2)|\le C|y_1-y_2|^{\alpha}, \quad \text{for $n\in \Z$, $x\in X$ and $y_1, y_2\in Y$.}\]
Moreover, if \[\label{c4} 2M\sup_{n\in \Z} \sum^{\infty}_{m=-\infty}|
\mathcal G(n,m)|\varepsilon_{m-1}^{\alpha}\Delta^{\alpha}_3(m-1,n)
	\le C,\]
then 
\[ |\bar h_n(x,y_1)-\bar h_n(x,y_2)|\le C|y_1-y_2|^{\alpha}, \quad \text{for $n\in \Z$, $x\in X$ and $y_1, y_2\in Y$.}\]
\end{theorem}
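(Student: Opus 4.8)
The plan is to transcribe the proof of Theorem~\ref{t3} into the discrete setting, reusing the index bookkeeping already developed in the proof of Theorem~\ref{t2d}. First I would record the discrete interpolation inequality: combining $|f_n(x,y)|\le M$ (from~\eqref{a1d}) with the Lipschitz bound in $(x,y)$ with constant $\varepsilon_n$, one obtains
\[
|f_n(x_1,y_1)-f_n(x_2,y_2)|\le 2M\varepsilon_n^\alpha[|x_1-x_2|+|y_1-y_2|]^\alpha,
\]
for all $n\in\Z$, $x_1,x_2\in X$ and $y_1,y_2\in Y$ — the exact analogue of~\eqref{Heq11}.

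Next, let $\mathcal Z$ and $T$ be as in the proof of Theorem~\ref{t1d}, and let $\mathcal Z''$ be the set of $\Psi=(\psi_n)_{n\in\Z}\in\mathcal Z$ with $|\psi_n(x,y_1)-\psi_n(x,y_2)|\le C|y_1-y_2|^\alpha$ for all $n\in\Z$, $x\in X$ and $y_1,y_2\in Y$; this is a closed subset of $\mathcal Z$. The heart of the argument is to show $T(\mathcal Z'')\subset\mathcal Z''$. For $\Psi\in\mathcal Z''$ one estimates
\[
|f_n(x+\psi_n(x,y_1),y_1)-f_n(x+\psi_n(x,y_2),y_2)|
\]
by taking the minimum of the Lipschitz bound $\varepsilon_n[|\psi_n(x,y_1)-\psi_n(x,y_2)|+|y_1-y_2|]$ and the interpolated bound $2M\varepsilon_n^\alpha[|\psi_n(x,y_1)-\psi_n(x,y_2)|+|y_1-y_2|]^\alpha$, inserting $|\psi_n(x,y_1)-\psi_n(x,y_2)|\le C|y_1-y_2|^\alpha$, and splitting into the cases $|y_1-y_2|>1$ and $|y_1-y_2|\le 1$ exactly as in~\eqref{op} with $t$ replaced by $n$; this gives the bound $M_1\varepsilon_n^\alpha(1+C)|y_1-y_2|^\alpha$ with $M_1=\max\{2M,N\}$. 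Writing
\[
(T\Psi)_n(\xi,\eta_1)-(T\Psi)_n(\xi,\eta_2)=\sum_{m=-\infty}^\infty\mathcal G(n,m)p(m),
\]
with $p(m)$ the corresponding difference of $f_{m-1}$-terms, the above pointwise estimate together with the assumed growth bound $|y(m-1,n,\eta_1)-y(m-1,n,\eta_2)|\le\sigma(m-1,n)|\eta_1-\eta_2|$ yields $|p(m)|\le M_1\varepsilon_{m-1}^\alpha(1+C)\sigma^\alpha(m-1,n)|\eta_1-\eta_2|^\alpha$. Summing against $|\mathcal G(n,m)|$ and invoking the first condition of the theorem gives $|(T\Psi)_n(\xi,\eta_1)-(T\Psi)_n(\xi,\eta_2)|\le C|\eta_1-\eta_2|^\alpha$, so $T\Psi\in\mathcal Z''$; since $\mathcal Z''$ is closed and $T$ is a contraction, its unique fixed point $(h_n)_{n\in\Z}$ lies in $\mathcal Z''$, which is the first assertion.

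For the second assertion I would use the formula for $\bar h_n$ from the proof of Theorem~\ref{t1d},
\[
\bar h_n(\xi,\eta)=-\sum_{m\in\Z}\mathcal G(n,m)f_{m-1}(x_2(m-1,n,\xi,\eta),y(m-1,n,\eta)),
\]
so that $\bar h_n(\xi,\eta_1)-\bar h_n(\xi,\eta_2)=\sum_m\mathcal G(n,m)p(m)$, where $p(m)$ is the difference of the two $f_{m-1}$-terms (note that $x_2$ depends on $\eta$ as well). Applying the interpolation inequality and then the assumed bound $|x_2(m-1,n,\xi,\eta_1)-x_2(m-1,n,\xi,\eta_2)|+|y(m-1,n,\eta_1)-y(m-1,n,\eta_2)|\le\Delta_3(m-1,n)|\eta_1-\eta_2|$ gives $|p(m)|\le 2M\varepsilon_{m-1}^\alpha\Delta_3^\alpha(m-1,n)|\eta_1-\eta_2|^\alpha$; summing against $|\mathcal G(n,m)|$ and using the second condition of the theorem finishes the proof.

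I expect no genuine obstacle here, since the argument is a faithful transcription of Theorems~\ref{t3} and~\ref{t2d}. The only points requiring care are the index shift $m\mapsto m-1$ inside the Green's-function sums, inherited from the definition of $T$ in Theorem~\ref{t1d}, and the case split in the $\min$-estimate, which must be carried out exactly as in~\eqref{op}; one should also note that in the $\bar h_n$ computation $x_2$ genuinely depends on $\eta$, so it is the combined bound controlling \emph{both} components of the difference of two solutions of~\eqref{nnd} agreeing in $x$ at time $n$ — encoded by $\Delta_3$ — that is needed, rather than $\sigma$ alone.
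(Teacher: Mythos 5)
Your proposal is correct and follows precisely the route the authors intend: the paper explicitly omits the proof of Theorem~\ref{t3d}, saying it is analogous to those of Theorems~\ref{t3} and~\ref{t2d}, and your argument is exactly that transcription, with the right index shift $m\mapsto m-1$, the right case split in the $\min$-estimate, and the correct use of $\Delta_3$ (rather than $\sigma$) for the $\bar h_n$ bound because $x_2$ depends on $\eta$.
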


\begin{remark}
Like in Remark \ref{rm1} (Remark \ref{rm2}, resp.) we can get that, under the assumptions of Theorem \ref{t2d} (Theorem \ref{t3d}, resp.), for each $n\in \Z$, both $H_n(x,y)$ and $\bar{H}_n(x,y)$ are H\"older continuous with respect to $x$ ($y$, resp.) when restricted to any bounded subset of $X$ ($Y$, resp.). Moreover, as in Remark \ref{rm3}, under the assumptions of Theorems \ref{t2d} and \ref{t3d}, we may conclude that for each $n\in \Z$, $H_n(x,y)$ and $\bar{H}_n(x,y)$ are H\"older continuous in $(x,y)$ when restricted to any bounded subset of $X\times Y$. 
\end{remark}

\begin{remark}\label{EDD}
As in Section \ref{EX}, we can easily apply Theorems \ref{t1d}, \ref{t2d} and 
\ref{t3d} to the context where the sequence of operators $(A_n)_{n\in \Z}$ admits an 
exponential dichotomy, that is, there exists a sequence $(P_n)_n$ of projections on 
$X=\R^n$ 
satisfying 
\begin{equation*}
{\mathcal A}(n,m)P_m=P_{n}{\mathcal A}(n,m) \quad \text{for $n, m\in \Z$,}
\end{equation*}
and positive constants $D_1$, $D_2$, $\lambda_1$, $\lambda_2$ such that for $n\ge m$
\[ |{\mathcal A}(n,m)P_m|\le D_1e^{-\lambda_1(n-m)},\quad
   |{\mathcal A}(m,n)(I-P_n)|\le D_2e^{-\lambda_2(n-m)},\]
and bounded growth and decay conditions
\[ |{\mathcal A}(n,m)|\le K_1e^{a_1(n-m)},\quad
    |{\mathcal A}(m,n)| \le K_2e^{a_2(n-m)},\]
where $K_1$, $K_2$, $a_1$, $a_2$ are positive constants such that $a_2\ge \lambda_1$,
$a_1\ge \lambda_2$. We skip the details. 
\end{remark}

\begin{remark}
We note that  Theorems~\ref{t1d} and~\ref{t2d} can also be applied in the case when $(A_n)_{n\in \Z}$ admits the so-called generalized exponential dichotomy~\cite{BD}. We recall that a sequence $(A_n)_{n\in \Z} \subset \mathcal B(X)$ of 
invertible operators is said to admit a generalized exponential dichotomy if 
\begin{itemize}
\item for each $n\in \Z$ there are closed subspaces $S(n)$ and $U(n)$ of $X$ such that
\begin{equation}\label{split}
X=S(n)\oplus U(n) \quad \text{for $n\in \Z$;}
\end{equation}
\item for each $n\in \Z$,
\begin{equation}\label{inv}
A_n S(n)\subset S(n+1) \quad \text{and} \quad A_n^{-1} U(n+1)\subset U(n);
\end{equation}
\item there exist $D, \lambda >0$ such that 
\[
| \mathcal A(m, n)x| \le De^{-\lambda (m-n)}|x| \quad \text{for $x\in S(n)$ and $m\ge n$,}
\]
and
\[
| \mathcal A(m, n)x| \le De^{-\lambda (n-m)}|x | \quad \text{for $x\in U(n)$ and $m\le n$;}
\]
\item we have that
\[
\sup_{n\in \Z} | P_n|<\infty, 
\]
where $P_n \colon X\to S(n)$ is a projection associated with the decomposition~\eqref{split}.
\end{itemize}
We observe that in contrast to the notion of an exponential dichotomy where 
\[
A_n S(n)=S(n+1) \quad \text{and} \quad A_n^{-1} U(n+1)=U(n),
\]
in the case of a generalized exponential dichotomy we impose a weaker condition~\eqref{inv}. We refer to~\cite[Section 5]{BD} for explicit examples of sequences that admit a generalized exponential dichotomy but do not admit an exponential dichotomy. We also note that when
$X$ is finite-dimensional, the notion of  a generalized exponential dichotomy coincides with the notion of an exponential trichotomy (which for the case of discrete dynamics, following~\cite{EH},  was first studied  in~\cite{P}).
Assuming that $\sup_{n\in \Z} \max \{ |A_n |, |A_n^{-1}| \} <+\infty$ and setting $\mathcal G(m,n)$ as in~\eqref{G-d}, one can easily obtain conditions under which Theorems~\ref{t1d} and~\ref{t2d} can be applied. We emphasize that those would be exactly the same as in the context of Remark~\ref{EDD}.

We note that when $X$ is finite-dimensional, it follows from~\cite[Proposition 1]{P} that $(A_n)_{n\in \Z}$ admits an exponential trichotomy if and only if the first estimate in~\eqref{rd} holds. 
\end{remark}

\section{Appendix}

\subsection{Solutions of~\eqref{Heq2} are defined for all time}\label{A1}
Let $t\mapsto y(t)$ be a solution of $\dot y=g(t,y)$. We know it is defined for
all $t$ and it is uniquely defined by its value at any time. Since $A(t)x+f(t,x,y(t))$ is locally
Lipschitz in $x$ it follows that the initial value problem
\[ \dot x=A(t)x+ f(t,x,y(t)),\quad x(\tau)=\xi \]
has a unique solution $x(t)$ for $t$ near $\tau$. Now we must show that this solution 
is defined for all time. Suppose $t\mapsto x(t)$ is not defined for all $t\ge \tau$. Then,  there exists $T>\tau$ such that $t\mapsto x(t)$ exists on $[\tau,T)$ 
but is not bounded. Let $t\mapsto z(t)$ be the solution of the inhomogeneous linear system
\[ \dot z=A(t)z+f(t,0,y(t)),\quad z(\tau)=x(\tau).\]
Then $t\mapsto z(t)$ exists and is bounded on $[\tau,T]$. Let the bound be $K$. We see by variation of constants that
\[ x(t)=z(t)+\int^t_{\tau}T(t,s)[f(s,z(s),y(s))-f(s,0,y(s))]\,ds.\]
Let $M>0$ be  such that
\[ |T(t,s)|\le M,\quad \tau\le s,t \le T.\]
By~\eqref{feq}, we have that 
\[ |x(t)| \le K+M\int^t_{\tau}\gamma(s)|x(s)|\, ds.\]
By Gronwall's lemma, it follows that for $\tau\le t<T$,
\[ |x(t)| \le Ke^{M\int^t_{\tau}\gamma(s)ds}\]
and hence  $t\mapsto x(t)$ is bounded on $[\tau, T)$, which yields a  contradiction. A similar argument works for $t\le \tau$.
So solutions of~\eqref{Heq2}  are defined for all $t$.

\subsection{Estimate for $\Delta_2(t,s)$}\label{A2}
Let $t\mapsto (x(t),y(t))$ and $t\mapsto (z(t),y(t))$ be solutions of (\ref{Heq2}). Then, by the variation of constants formula we have that 
\[\begin{split}
x(t)-z(t)
&=T(t,s)[x(s)-z(s)]\\ 
&\phantom{=} +\int^t_s T(t,u)[f(u,x(u),y(u))-f(u,z(u),y(u)]\, du.\end{split}\]
So if $t\ge s$, it follows from~\eqref{est-1} and~\eqref{EPs} that 
\[|x(t)-z(t)|\le K_1e^{a_1(t-s)}|x(s)-z(s)|+\int^t_sK_1e^{a_1(t-u)}\varepsilon|x(u)-y(u)|\,du.\]
Set $v(t)=e^{-a_1(t-s)}|x(t)-z(t)|$. Then,
\[ v(t)\le K_1|x(s)-z(s)|+K_1\varepsilon\int^t_sv(u)\, du.\]
By Gronwall's lemma, it follows that
\[ v(t)\le K_1|x(s)-z(s)|e^{K_1\varepsilon(t-s)}\]
and hence
\[ |x(t)-z(t)|\le K_1|x(s)-z(s)|e^{(a_1+K_1\varepsilon)(t-s)}.\]
The argument for $t\le s$ is similar.

\subsection{Estimate for $\Delta_3(t,s)$}\label{A3}
Let $t\mapsto (x(t),y(t))$ and
$t\mapsto (z(t),w(t))$ be solutions of (\ref{Heq2}) such that $x(s)=z(s)$. Then
\[ x(t)-z(t)=\int^t_s T(t,u)[f(u,x(u),y(u))-f(u,z(u),w(u))]\, du.\]
So if $t\ge s$, it follows from~\eqref{est-1}, \eqref{3:47} and~\eqref{M2} that 
\[ |x(t)-z(t)|\le \int^t_sK_1e^{M_3(t-u)}\varepsilon[|x(u)-z(u)|+|y(u)-w(u)|]\, du\]
and
\[ |y(t)-w(t)|\le e^{M_3(t-s)}|y(s)-w(s)|.\]
Then,
\[\begin{split}
 \int^t_se^{M_3(t-u)}|y(u)-w(u)|\, du
&\le \displaystyle \int^t_se^{M_3(t-u)}e^{M_3(u-s)}|y(s)-w(s)|\, du\\ 
&=(t-s)e^{M_3(t-s)}|y(s)-w(s)|.\end{split}\]
It follows that
\[ |x(t)-z(t)|\le K_1\varepsilon\int^t_se^{M_3(t-u)}|x(u)-z(u)|\, du
+K_1\varepsilon(t-s)e^{M_3(t-s)}|y(s)-w(s)|.\]
Set 
\[ v(t)=e^{-M_3(t-s)}|x(t)-z(t)|.\]
Then,
\[ v(t)\le K_1\varepsilon\int^t_sv(u)du+\lambda(t),\]
where
\[ \lambda(t)=K_1\varepsilon(t-s)|y(s)-w(s)|.\]
By Gronwall's lemma, we have 
\[ v(t)\le \lambda(t)+K_1\varepsilon\int^t_s\lambda(u)e^{K_1\varepsilon(t-u)}\, du.\]
Now, integrating by parts,
\[\begin{split}
& K_1\varepsilon\int^t_s\lambda(u)e^{K_1\varepsilon(t-u)}\, du\\ 
&= -e^{K_1\varepsilon(t-u)}\lambda(u)\bigg|^t_s
+\int^t_se^{K_1\varepsilon(t-u)}K_1\varepsilon|y(s)-w(s)|\, du\\ \\
&=-\lambda(t)+(e^{K_1\varepsilon(t-s)}-1)|y(s)-w(s)|.
\end{split}\]
Hence,
\[ v(t)\le (e^{K_1\varepsilon(t-s)}-1)|y(s)-w(s)|,\]
and so for $t\ge s$ we have that 
\[ |x(t)-z(t)|\le e^{(M_3+K_1\varepsilon)(t-s)}|y(s)-w(s)|.\]
Then
\[ |x(t)-z(t)|+|y(t)-w(t)|\le 2e^{(M_3+K_1\varepsilon)(t-s)}|y(s)-w(s)|.\]
The proof for $t\le s$ is similar. 

\medskip{\bf Acknowledgements.}
We would like to thank the referee for useful comments and suggestions. 
 L.B. was partially supported by a CNPq-Brazil PQ fellowship under Grant No. 306484/2018-8. D.D. was supported in part by Croatian Science Foundation under the project
IP-2019-04-1239 and by the University of Rijeka under the projects uniri-prirod-18-9 and uniri-pr-prirod-19-16.

\end{document}